\UseRawInputEncoding
\documentclass[11pt,a4paper]{article}
\usepackage{}
\usepackage{bbding}
\usepackage{amsfonts}
\usepackage{amssymb}
\usepackage{mathrsfs}
\usepackage{amsmath}
\usepackage{epsf,epsfig,amsgen,amstext,amsbsy,amsopn,amsthm,indentfirst}
\usepackage{lineno}
\allowdisplaybreaks

\setlength{\textwidth}{160mm} \setlength{\oddsidemargin}{-0mm}
\setlength{\evensidemargin}{5mm} \setlength{\textheight}{245mm}
\setlength{\textwidth}{160mm}\topmargin -18mm

\newtheorem{theorem}{Theorem}[section]

\newtheorem{defi}[theorem]{Definition}

\newtheorem{lemma}{Lemma}[section]

\newtheorem{problem}[theorem]{Problem}

\baselineskip 15pt

\begin{document}
\title
{\LARGE \textbf{ Further results on the permanental sums  of bicyclic  graphs}}

\author{ Tingzeng Wu\thanks{\emph{E-mail address}: mathtzwu@163.com}, Yinggang Bai\\
{\small School of Mathematics and Statistics, Qinghai Nationalities University,}\\
{\small  Xining, Qinghai 810007, P.R.~China}}

\date{}

\maketitle

\noindent {\bf Abstract: } Let $G$ be a graph, and let $A(G)$ be the adjacency matrix of  $G$. The permanental
polynomial of $G$ is defined as $\pi(G,x)=\mathrm{per}(xI-A(G))$. The  permanental sum of   $G$  can be defined as the sum of  absolute value of  coefficients of $\pi(G,x)$. Computing  the permanental sum is $\#$P-complete. Any a bicyclic graph  can be generated from three types of induced subgraphs.
In this paper,  we determine the upper bound of permanental sums of  bicyclic graphs generated from each a type of induced subgraph. And we also determine the second maximal permanental sum of all  bicyclic graphs.

\smallskip
\noindent\textbf{AMS classification}: 05C31; 05C75; 15A15;	92E10  \\
\noindent {\bf Keywords:} Permanent; Permanental polynomial;
 Permanental sum; Bicyclic graph

\section{Introduction}

The {\em permanent} of $n\times n$ matrix $X=(x_{ij})(i,j=1,2,\ldots,n)$ is defined as $${\rm per}(X)=\sum_{\sigma}\prod_{i=1}^{n}x_{i\sigma(i)},$$
where the sum is taken over all permutations $\sigma$ of $\{1,2,\ldots, n\}$.

Let $G$ be a graph with vertex set $V(G)$ and edge set $E(G)$. The number of   vertices of $G$ is called its order. Particularly, $G$ is called {\em empty graph} if $V(G)=\emptyset$.
Let  $A(G)$  and $I$  denote the  adjacency matrix and  unit matrix, respectively.  The {\em permanental polynomial} of $G$ is defined as
\begin{eqnarray*}
\pi(G,x)=\mathrm{per}(xI-A(G))=\sum_{k=0}^{n}b_{k}(G)x^{n-k},
\end{eqnarray*}
and its theory is well elaborated \cite{cas1,kas,shi,yan,zha}.

A {\em Sachs graph} is a graph in which each component is a single edge or a cycle. Merris et al.\cite{mer} given  the Sachs type result concerning the coefficients of the permanental polynomial of $G$, i.e.,
\begin{eqnarray*}\label{equ1}
b_{k}(G)=(-1)^{k}\sum_{H}2^{c(H)},~~ 0\leq k \leq n,
\end{eqnarray*}
where the sum is taken over all Sachs subgraphs $H$ of $G$ on $k$ vertices and $c(H)$ is the
number of cycles in $H$.

The {\em permanental sum} of a graph  $G$, denoted by  $PS(G)$, can be defined as the summation of all absolute values of  coefficients of permanental polynomial of $G$, and
 it can be expressed by the
following equation:
\begin{eqnarray*}\label{equ2}
PS(G)=\sum\limits_{i=0}^{n}|b_{i}(G)|
=\sum\limits_{i=0}^{n}\sum_{H}2^{c(H)},
\end{eqnarray*}
where the second sum is taken over all Sachs subgraphs $H$ of $G$ on $k$ vertices and $c(H)$ is the
number of cycles in $H$.
 Specifically, $PS(G)=1$ if $G$ is an empty graph. Wu and So\cite{wuso1} have shown that computing  the permanental sum is $\#$P-complete.

Studies on the permanental sum of a graph  is of great interest in both chemistry and mathematics. The permanental sum of a graph was first considered by Tong\cite{ton}.  Xie et al.\cite{xie} captured a labile fullerene $C_{50}(D_{5h})$. Tong computed all 271 fullerenes in $C_{50}$. In
his study, Tong found that the permanental sum of $C_{50}(D_{5h})$
achieves the minimum among all 271 fullerenes in $C_{50}$. He
pointed that the permanental sum would be closely related to
stability of molecular graphs.
  The permanental sum of a graph is also meaningful in
mathematics since it is related to the enumeration of Sachs subgraphs of a graph. Recently,  Wu and Lai\cite{wu1} systematically introduced the properties of permanental sum of a graph.  Li et al.\cite{liqz} determined the extremal hexagonal chains with respect to permanental sum. Li and Wei\cite{liw} study the property of permanental sum of  octagonal chains. For the background and some known results about permanental sum, we refer the reader to \cite{so, wuso2} and the references therein.


A {\em bicyclic graph} is a simple connected graph where the number of edges equals the number
of vertices plus one.  Let $\mathscr{B}_{n}$ be the collection of all bicyclic graphs with $n$ vertices. By the structures of bicyclic graphs, it can be known that $\mathscr{B}_{n}$ consists of three types of graphs: first type, denoted by
$\mathscr{B}^{1}_{n}(p,q)$, is the set of those graphs each of which contains  $B_{1}(p,q)$ as  vertex-induced subgraph; second type, denoted by
$\mathscr{B}^{2}_{n}(p,q,r)$, is the set of those graphs each of which contains  $B_{2}(p,q,r)$ as  vertex-induced subgraph; third type, denoted by
$\mathscr{B}^{3}_{n}(p,q,r)$, is the set of those graphs each of which contains  $B_{3}(p,q,r)$ as  vertex-induced subgraph. Obviously, $\mathscr{B}_{n}=\mathscr{B}^{1}_{n}(p,q)\cup\mathscr{B}^{2}_{n}(p,q,r)\cup\mathscr{B}^{3}_{n}(p,q,r)$.

\begin{figure}[htbp]
\begin{center}
\includegraphics[scale=0.55]{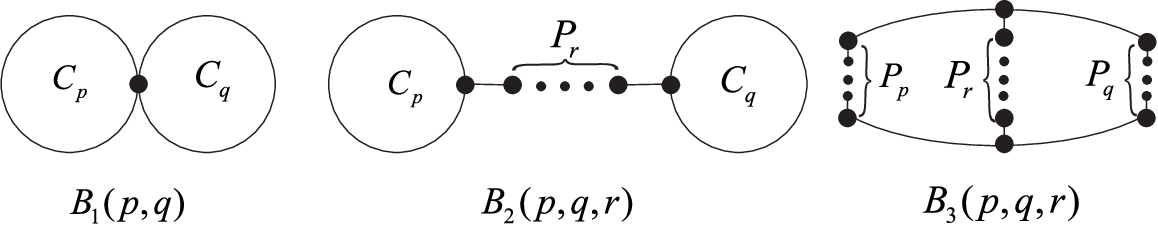}
\caption{\label{fig11}\small {Bicyclic graphs  $B_{1}(p,q)$,   $B_{2}(p,q,r)$ and  $B_{3}(p,q,r)$.}}
 \end{center}
 \end{figure}

In \cite{wud}, the authors investigated the properties of permanental sums of bicyclic graphs. And the
bounds of permanental sums of bicyclic graphs are determined. Furthermore, the authors presented two problems. That is,
\begin{problem}
Characterizing the extremal graphs with maximum permanental sum in $\mathscr{B}^{1}_{n}(p,q)$ and $\mathscr{B}^{3}_{n}(p,q,r)$, respectively.
\end{problem}
\begin{problem}
Determining the second  maximal  permanental sums of bicyclic graphs in $\mathscr{B}_{n}$.
\end{problem}

In this paper, we give the solutions of the problems as above. And we obtain the results as follows.
\begin{theorem}\label{art1}
Let $G\in\mathscr{B}^{1}_{n}(p,q)$ be a bicyclic graph with $n\geq7$ vertices. Then $PS(G)\leq 8F(n-3)+12F(n-4)$ and the equality holds if and only if $G\cong H$, where $H$ obtained by connecting a vertex of degree 2 of  $B_{1}(3,3)$ and a pendant vertex of $P_{n-5}$.
\end{theorem}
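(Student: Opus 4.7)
My strategy is to reduce an arbitrary graph $G\in\mathscr{B}^1_n(p,q)$, via standard extremal transformations, to a configuration of the form ``core $B_1(p,q)$ with a single pendant path attached'', and then to optimize over the finite choice of core and attachment vertex. The backbone of the analysis is the pendant-vertex recursion for $PS$ that follows directly from the Sachs subgraph expansion: if $v$ is a pendant vertex of $G$ with neighbour $u$, then
\begin{equation*}
PS(G)=PS(G-v)+PS(G-u-v).
\end{equation*}
Iterating this identity along a pendant path of $k$ vertices attached at a vertex $u$ of a fixed core $C$ yields
\begin{equation*}
PS(G_k)=PS(C)\cdot F(k+1)+PS(C-u)\cdot F(k),
\end{equation*}
where $G_k$ denotes the resulting graph and $F$ is the Fibonacci sequence. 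This explains the Fibonacci weights in the stated bound and reduces the extremal problem to maximizing the pair $\bigl(PS(C),\,PS(C-u)\bigr)$ over admissible cores $C=B_1(p,q)$ and attachment vertices $u$.

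The first step is a path-beats-branch transformation lemma (a classical extremal-tree idea, adapted to $PS$): if $G\in\mathscr{B}^1_n(p,q)$ has a vertex $w$ supporting two distinct pendant paths of lengths $s\ge t\ge 1$, then concatenating them into a single pendant path at $w$ strictly increases $PS$. A companion lemma shows that consolidating pendant trees attached at several distinct core vertices into a single pendant tree at one well-chosen core vertex does not decrease $PS$. Applied repeatedly, these transformations force the maximizer to have the form ``$B_1(p,q)$ together with one pendant path $P_{n-p-q+1}$ attached at a single vertex $u$''.

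It then remains to compare the finitely many candidate configurations. Direct Sachs counting gives $PS(B_1(3,3))=20$ and $PS(B_1(3,3)-u)=8$ for $u$ any degree-$2$ vertex of $B_1(3,3)$, and a short Fibonacci manipulation via $F(n-3)=F(n-4)+F(n-5)$ converts $20F(n-4)+8F(n-5)$ into the claimed expression $8F(n-3)+12F(n-4)$. For every other choice of $(p,q)$ and attachment vertex $u$, I would verify that the pair $\bigl(PS(B_1(p,q)),\,PS(B_1(p,q)-u)\bigr)$ is strictly dominated by $(20,8)$ in the sense that matters for the above linear combination, which forces the strict inequality $PS(G)<PS(H)$ and pins down uniqueness of the extremal graph.

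The principal obstacle is the path-beats-branch lemma in the bicyclic setting: because the core contains two cycles, the Sachs subgraph enumeration includes contributions from subgraphs that contain one or both of them, and these must be shown to transform favourably under the straightening operation — something which is automatic in the purely tree-theoretic case but not here. The secondary obstacle is the finite case analysis on the cores $B_1(p,q)$, where one must rule out the competitor $B_1(3,3)$ with $u$ the cut vertex, as well as $B_1(3,4)$, $B_1(4,4)$ and related small cases with $u$ chosen optimally; the hypothesis $n\ge 7$ is precisely where the Fibonacci growth becomes strong enough that this leading-coefficient comparison becomes decisive.
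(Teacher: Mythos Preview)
Your overall strategy---reduce by tree-straightening transformations, then optimize over the choice of core $B_1(p,q)$ and attachment vertex---matches the paper's, and your pendant-path recursion $PS(G_k)=PS(C)F(k+1)+PS(C-u)F(k)$ together with the computation $20F(n-4)+8F(n-5)=8F(n-3)+12F(n-4)$ is exactly how the extremal value emerges. Two points where your plan diverges from the paper deserve attention.

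First, your ``consolidation lemma'' is stronger than what the paper actually proves. The paper uses two transformations: type~\textbf{I} (replace any pendant tree by a pendant path at the same vertex) and type~\textbf{II} (slide a pendant path along an internal degree-$2$ path of the core). Crucially, type~\textbf{II} only guarantees $PS(G_1)<PS(G_2)$ \emph{or} $PS(G_1)<PS(G_3)$, i.e.\ the path can be pushed toward one end or the other, but you do not control which. Because of this, iterated application does \emph{not} collapse everything to a single pendant path; the paper is left with six canonical shapes $B_1(p,q)$, $B_1^1,\dots,B_1^5$ (with up to three pendant paths attached at distinct core vertices, possibly on different cycles or at the cut vertex), and then disposes of each by a short direct comparison with $B_1^2(3,3,n-5)$ (Lemmas~3.1--3.7). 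Your proposed single-step consolidation would need a genuinely new argument when the two pendant paths sit on different cycles of $B_1(p,q)$, where type~\textbf{II} does not apply.

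Second, the optimization over cores is not finite: $p+q$ ranges up to $n+1$, so you cannot simply tabulate $\bigl(PS(B_1(p,q)),PS(B_1(p,q)-u)\bigr)$ for small cases. The paper handles this in Lemma~3.2 by a Fibonacci-identity computation showing $PS(B_1^2(3,3,n-5))-PS(B_1^2(p,q,r))>0$ for all admissible $(p,q,r)$ with $p+q\ge 8$, together with a separate check at $p+q=7$. Your ``leading-coefficient comparison'' idea is the right intuition, but it must be carried out as an inequality in $n$ (via Lemma~\ref{art210}-type identities), not as a finite lookup.
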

\begin{theorem}\label{art2}
Let $G\in\mathscr{B}^{2}_{n}(p,q,r)-B_{2}(3,3,n-6)$ be a bicyclic graph with $n\geq11$ vertices. Then $PS(G)\leq 36F(n-5)+24F(n-6)$ and the equality holds if and only if $G\cong B'_{2}(3,3,1,n-7)$.
\end{theorem}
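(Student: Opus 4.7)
\noindent\textbf{Proof plan for Theorem~\ref{art2}.}
The strategy parallels the proof of Theorem~\ref{art1}: reduce any $G \in \mathscr{B}_n^{2}(p,q,r)$ to the shape \emph{core $B_2(p,q,r)$ with one pendant path attached at some vertex}, exploit the Fibonacci-type recurrence satisfied by $PS$ along that pendant path, and finish by a finite comparison of cores.

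First, I would invoke the bridge-deletion identity $PS(G) = PS(G-e) + PS(G-u-v)$, valid for every cut edge $e=uv$ of $G$, together with the tree-transformation results recalled in \cite{wu1,wud}: among all trees of a given order rooted at a fixed vertex, the pendant path uniquely attains the maximum permanental sum. Iterating these transformations on each tree-branch hanging off the core shows that one may assume without loss of generality that $G$ has the form $G_s(v) := B_2(p,q,r)$ with a pendant path on $s$ extra vertices glued at a vertex $v$ of the core, where $s = n - |V(B_2(p,q,r))|$. The graph $B_2(3,3,n-6)$ that is explicitly removed from the class is precisely the case $s = 0$ with $(p,q,r) = (3,3,n-6)$.

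For fixed $v$, applying the bridge recurrence repeatedly along the pendant path gives
\begin{equation*}
PS(G_s(v)) = PS(G_{s-1}(v)) + PS(G_{s-2}(v)),
\end{equation*}
whose solution has the form $PS(G_s(v)) = \alpha(v)\,F(s) + \beta(v)\,F(s-1)$, with $\alpha(v), \beta(v)$ fixed by the base values $PS(G_0(v))$ and $PS(G_1(v))$. A direct computation on the core $B_2(3,3,1)$, with $v$ chosen to be an appropriate vertex of one of the triangles, produces (after re-indexing the Fibonacci numbers against $s$) the bound $36F(n-5)+24F(n-6)$ and singles out the candidate extremum $B_2'(3,3,1,n-7)$.

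The remaining task, and the one I expect to be the main obstacle, is to certify that no other admissible core $(p,q,r) \ne (3,3,n-6)$ and no other attachment vertex yields a larger Fibonacci combination. Because the Fibonacci sequence is positive and strictly increasing, coordinate-wise dominance of $(36,24)$ over the pair $(\alpha(v),\beta(v))$ produced by any rival core, together with verification at the smallest relevant $s$, closes the argument via standard monotonicity. The delicate work is tabulating $(\alpha(v),\beta(v))$ over every vertex orbit of each small core with $p+q+r \le n-1$, particularly the families $B_2(3,3,r)$ ($r \ge 2$), $B_2(3,4,r)$, $B_2(3,5,r)$ and $B_2(4,4,r)$; the hypothesis $n \ge 11$ guarantees $s \ge 1$ in every comparison and is what makes the asymptotic Fibonacci estimates decisive. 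Once these base-case inequalities are verified, strict monotonicity of $F$ forces uniqueness of the extremal graph.
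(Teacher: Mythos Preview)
Your reduction step overclaims. Transformation~I only straightens a single tree at its attachment point into a path; it does not merge pendant material hanging at \emph{different} vertices of the core into one path. In the paper's own proof of Theorem~\ref{art1} the combination of Transformations~I and~II still leaves six distinct shapes (some with two or three pendant paths at separate core vertices), not a single $G_s(v)$. For $\mathscr{B}^2_n(p,q,r)$ the core $B_2(p,q,r)$ has an additional parameter $r$ and more vertex orbits, so the shape list after reduction is longer, not shorter. Moreover, your claim that ``$n\ge 11$ guarantees $s\ge 1$ in every comparison'' is false: bare cores like $B_2(3,4,n-7)$ or $B_2(4,4,n-8)$ lie in $\mathscr{B}^2_n(p,q,r)\setminus\{B_2(3,3,n-6)\}$ with $s=0$, so you cannot avoid comparing infinitely many cores directly. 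The ``delicate work'' you defer is therefore not a finite tabulation but a parametric family of inequalities in $p,q,r$, and your outline gives no mechanism for handling it.

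The paper takes an entirely different route that sidesteps all of this. Rather than reducing the pendant forest, it exploits the fact that every $G\in\mathscr{B}^2_n(p,q,r)$ has a bridge edge $uv$ with $u$ on one of the two cycles: deleting $uv$ splits $G$ into a disjoint union $U_s\cup U_{n-s}$ of two unicyclic graphs (pendant trees and all). The cut--edge identity $PS(G)=PS(U_s)PS(U_{n-s})+PS(G-\{u,v\})$ then reduces the problem to the known sharp bounds for $PS$ on unicyclic graphs (Lemmas~\ref{newwl} and~\ref{newwll}) and on forests with at least two components (Lemma~\ref{new3}). A short case analysis on $s\in\{3,4\}$ versus $s\ge 5$, together with the auxiliary estimates in Lemmas~\ref{art42}--\ref{art44}, finishes the proof and pins down the equality case. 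This decomposition is specific to the $\mathscr{B}^2$ family and is both shorter and tighter than a shape--enumeration argument.
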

\begin{theorem}\label{art3}
Let $G\in\mathscr{B}^{3}_{n}(p,q,r)$ be a bicyclic graph with $n\geq6$ vertices. Then $PS(G)\leq 14F(n-3)+6F(n-4)$ and the equality holds if and only if $G\cong H$, where $H$ obtained by connecting a vertex of degree 2 of  $B_{3}(1,1,0)$ and a pendant vertex of $P_{n-4}$.
\end{theorem}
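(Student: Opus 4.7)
The plan is to combine the standard permanental-sum recursions with a chain of structural reductions that simplify an arbitrary $G\in\mathscr{B}^{3}_{n}(p,q,r)$ until it coincides with the graph of the theorem. The two recursions I will use throughout are: for a pendant edge $uv$ with $u$ of degree one,
\begin{equation*}
PS(G)=PS(G-u)+PS(G-u-v),
\end{equation*}
and for a general edge $e=uv$ lying on one or more cycles,
\begin{equation*}
PS(G)=PS(G-e)+PS(G-u-v)+2\sum_{C\ni e}PS(G-V(C)),
\end{equation*}
the last sum running over all cycles of $G$ through $e$. Iterating the first recursion along a path gives $PS(P_m)=F(m+1)$ and produces exactly the Fibonacci coefficients that appear in the statement.

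The reduction proceeds in three moves. First, a branch-rearrangement argument (analogous to the one used for $\mathscr{B}^{2}_{n}$ in \cite{wud}) shows that each pendant tree attached to the theta skeleton $B_{3}(p,q,r)$ may be replaced by a path on the same vertex set without decreasing $PS$, with strict increase unless the tree was already a path. Second, a repeated application of the pendant recursion combined with standard Fibonacci inequalities merges pendant paths attached at distinct skeleton vertices into a single pendant path, again with strict increase. Third, using the general edge recursion on a cycle edge of the skeleton and carefully tracking the cycle-correction terms $2\,PS(G-V(C))$, one shows that the theta skeleton must collapse to $B_{3}(1,1,0)\cong K_{4}-e$ and that the single pendant path must be attached at a vertex of degree two; this pins down the unique candidate $H_n$.

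To finish, let $v$ be the pendant end of the attached path in $H_n$ and apply the pendant recursion at $v$ to obtain
\begin{equation*}
PS(H_n)=PS(H_{n-1})+PS(H_{n-2}),\qquad n\geq 7.
\end{equation*}
Thus $PS(H_n)$ satisfies the Fibonacci recurrence. Computing the two seed values $PS(H_6)$ and $PS(H_7)$ directly (by applying the Sachs-type formula to $K_{4}-e$ with one or two pendants attached) and matching to the form $\alpha F(n-3)+\beta F(n-4)$ pins down $\alpha=14$ and $\beta=6$.

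The main obstacle will be the skeleton-minimization step, because the general recursion introduces cycle-correction terms $2\,PS(G-V(C))$ that must be controlled across different theta skeletons and attachment points simultaneously. I expect to handle this by checking a small number of base-case comparisons at the minimum value of $n$ and then propagating to all larger $n$ via the monotonicity of the Fibonacci recursion, mirroring the strategy used for $\mathscr{B}^{1}$ and $\mathscr{B}^{2}$ in Theorems \ref{art1} and \ref{art2}.
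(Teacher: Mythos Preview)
Your plan is sound in spirit and uses the same toolkit as the paper (the pendant recursion, the cycle-edge recursion with correction terms, and the tree-to-path Transformation~I). The organization, however, differs in a way worth noting. The paper does \emph{not} carry out your second move---merging several pendant paths into one---as a preliminary reduction. Instead it splits the family by whether the third internal path of the theta skeleton is trivial ($r=0$) or not ($r\ge 1$), and in each subfamily it applies the cycle-edge recursion once (on the multi-edge $uv$ when $r=0$, or on an edge $uu_1$ of one arm when $r\ge 1$) and then bounds every resulting piece \emph{directly} using off-the-shelf lemmas: the sharp forest bound $PS(H)\le F(n+1)$, the two-component forest bound $PS(H)\le 2F(n-1)$, and the sharp unicyclic bound $PS(U)\le PS(D(r,n-r))$. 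This sidesteps the merging step altogether and makes the skeleton-minimization step almost automatic, since after one edge-deletion the skeleton becomes a single unicyclic graph to which Lemma~\ref{art51} applies. Your transformation-based route should also work, but be careful with move~2: Transformation~II (Theorem~\ref{art25}) only asserts that \emph{one} of the two merge directions strictly increases $PS$, not both, and it requires the two pendant paths to hang from interior vertices of a common degree-two path; paths attached to different arms of the theta, or to a degree-three vertex, are not covered by that statement as written. The paper's direct-bounding approach buys you freedom from tracking which merge direction wins, at the cost of a short separate check of the small cases $n=5,6$ and of the $p=q=1$ subcase by hand.
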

By Theorems \ref{art1},  \ref{art2} and  \ref{art3}, we can obtain that
\begin{theorem}\label{art4}
Let $G\in\mathscr{B}_{n}-B_{2}(3,3,n-6)$ be a bicyclic graph with $n\geq11$ vertices. Then $PS(G)\leq 36F(n-5)+24F(n-6)$ and the equality holds if and only if $G\cong H$, where $H$ obtained by joining the vertex of degree 2 of $P_{1}$ in $B_{2}(3,3,1)$ to a pendant vertex of $P_{n-7}$.
\end{theorem}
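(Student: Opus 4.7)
The plan is to derive Theorem~\ref{art4} as an immediate consequence of Theorems~\ref{art1}, \ref{art2} and \ref{art3}. Since $\mathscr{B}_{n}=\mathscr{B}^{1}_{n}(p,q)\cup\mathscr{B}^{2}_{n}(p,q,r)\cup\mathscr{B}^{3}_{n}(p,q,r)$, every $G\in\mathscr{B}_n-B_2(3,3,n-6)$ belongs to one of the three subfamilies (with $B_2(3,3,n-6)$ removed from the second one), so its permanental sum is already controlled by one of the three established upper bounds. What remains is purely to identify which of the three bounds is largest for $n\geq 11$ and to read off the corresponding extremal graph.

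First I would convert the three bounds into a common Fibonacci basis using the recursions $F(n-3)=F(n-4)+F(n-5)=2F(n-5)+F(n-6)$ and $F(n-4)=F(n-5)+F(n-6)$. This rewrites the bound from Theorem~\ref{art1} as $8F(n-3)+12F(n-4)=28F(n-5)+20F(n-6)$ and the bound from Theorem~\ref{art3} as $14F(n-3)+6F(n-4)=34F(n-5)+20F(n-6)$, while the bound from Theorem~\ref{art2} is already in the form $36F(n-5)+24F(n-6)$. Pairwise subtraction gives positive differences $8F(n-5)+4F(n-6)$ and $2F(n-5)+4F(n-6)$, both strictly positive for $n\geq 11$.

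Combining these facts yields $PS(G)\leq 36F(n-5)+24F(n-6)$ for every $G\in\mathscr{B}_n-B_2(3,3,n-6)$. Moreover, because the inequalities between the three class bounds are strict, equality in Theorem~\ref{art4} can only be attained inside $\mathscr{B}^{2}_{n}(p,q,r)-B_2(3,3,n-6)$. Invoking the equality clause of Theorem~\ref{art2} then identifies the unique extremal graph as $B'_{2}(3,3,1,n-7)$, which is precisely the graph obtained by joining the degree-$2$ vertex of $P_1$ inside $B_2(3,3,1)$ to a pendant vertex of $P_{n-7}$, matching the statement of Theorem~\ref{art4}.

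Since Theorems~\ref{art1}--\ref{art3} do all the heavy lifting, there is no substantive analytic obstacle. The only task is the Fibonacci bookkeeping and the verification that the pairwise gaps remain positive throughout $n\geq 11$, which reduces to the trivial inequality $F(n-5)>0$; the unique mild subtlety is ensuring that the strict dominance of the Theorem~\ref{art2} bound actually propagates to strict inequality of permanental sums on $\mathscr{B}^{1}_{n}\cup\mathscr{B}^{3}_{n}$, but this is automatic because those classes have strictly smaller upper bounds.
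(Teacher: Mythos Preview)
Your proposal is correct and follows exactly the approach the paper itself indicates: the paper simply states that Theorem~\ref{art4} follows from Theorems~\ref{art1}, \ref{art2} and \ref{art3}, and your Fibonacci bookkeeping (rewriting $8F(n-3)+12F(n-4)=28F(n-5)+20F(n-6)$ and $14F(n-3)+6F(n-4)=34F(n-5)+20F(n-6)$, then comparing with $36F(n-5)+24F(n-6)$) is precisely the omitted verification. Your identification of the extremal graph $H$ with $B'_{2}(3,3,1,n-7)$ is also correct.
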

The rest of this paper is organized as follows. In Section 2, we present some preliminaries
and a list of some previously known results about  permanental sums of graphs. In Section 3,
we give the proof of Theorem \ref{art1}. In Section 4, we give the proof of Theorem \ref{art2}. In final section, we give the proof of Theorem \ref{art3}.

\section{Preliminaries}

In this section, we present some definitions and lemmas which are  important roles in the later. First, we give the definitions of three types of graphs as follows.
\begin{itemize}
\item Let $D(r, n-r)$ be the graph obtained by attaching an end vertex   of $P_{n-r}$ to one vertex of $C_{r}$, where $n\geq 4$ and $3\leq r\leq n-1$.

\item Let $D'(3,r,n-r-3)$ be a unicyclic graph with $n$ vertices obtained by identifying the pendant vertex of $D(3,1)$ and the $r$-th vertex of $P_{n-3}=v_{1}v_{2}\ldots v_{\lfloor\frac{n-3}{2}\rfloor}$. In particular, $D'(3,1,n-4)=D(3, n-3)$.

\item Let $B'_{2}(3,3,1,n-7)$ be the graph obtained by the vertex of $P_{1}$ in $B_{2}(3,3,1)$ attaching a pendant vertex of $P_{n-7}$.

\end{itemize}

\begin{lemma}\label{art21} {\rm \cite{wu1}}
 The permanental sum of a graph satisfies the following identities: \\
 $(i)$ Let $G$ and $H$ be two connected graphs. Then
 \begin{eqnarray*}
 PS(G\cup H) = PS(G)PS(H).
 \end{eqnarray*}
 $(ii)$  Let $uv$ be an edge of  graph $G$ and ${\cal C}(uv)$ the set of cycles containing $uv$. Then
 \begin{eqnarray*}
 PS(G) = PS(G-uv)+PS(G-v-u)+2\sum_{C_k \in {\cal C}(uv)}PS(G-V(C_k)).
 \end{eqnarray*}
 $(iii)$ Let $v$ be a vertex of  graph $G$ and ${\cal C}(v)$ the set of cycles containing $v$. Then
 \begin{eqnarray*}
 PS(G) = PS(G-v)+\sum_{u \in N_{G}(v)}PS(G-v-u)+2\sum_{C_{k} \in {\cal C}(v)}PS(G-V(C_k)).
 \end{eqnarray*}
 \end{lemma}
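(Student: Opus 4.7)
The plan is to derive Theorem~\ref{art4} as a direct consequence of Theorems~\ref{art1}, \ref{art2}, and~\ref{art3}. Since $\mathscr{B}_n = \mathscr{B}^1_n(p,q)\cup\mathscr{B}^2_n(p,q,r)\cup\mathscr{B}^3_n(p,q,r)$, any $G\in \mathscr{B}_n-B_2(3,3,n-6)$ belongs to at least one of these three classes (the single exclusion lying in the second class). First I would apply each of the three earlier theorems to obtain the respective class-wise upper bounds
\[
8F(n-3)+12F(n-4),\qquad 36F(n-5)+24F(n-6),\qquad 14F(n-3)+6F(n-4).
\]

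Next I would show that the middle bound dominates the other two for every $n\geq 11$. Using the Fibonacci recurrence in the form $F(n-3)=2F(n-5)+F(n-6)$ and $F(n-4)=F(n-5)+F(n-6)$, one rewrites the first and third bounds in the basis $\{F(n-5),F(n-6)\}$; a direct coefficient comparison then exhibits that both are strictly smaller than $36F(n-5)+24F(n-6)$, using only that $F(n-5),F(n-6)>0$ (which holds once $n\geq 7$, comfortably implied by $n\geq 11$). Consequently every $G$ in the union satisfies $PS(G)\leq 36F(n-5)+24F(n-6)$.

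For the equality case, extremality must occur within the class whose bound is the largest, namely $\mathscr{B}^2_n(p,q,r)-B_2(3,3,n-6)$, since the strict inequalities just established rule out any graph in $\mathscr{B}^1_n(p,q)$ or $\mathscr{B}^3_n(p,q,r)$ (and in particular the extremal graphs from Theorems~\ref{art1} and~\ref{art3}) from attaining the bound. By Theorem~\ref{art2} the unique extremal graph in this class is $B'_2(3,3,1,n-7)$, which by the construction recalled in Section~2 coincides with the graph $H$ described in the statement.

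The main ``obstacle'' is merely the algebraic comparison of Fibonacci expressions in the second step; no new combinatorial or structural argument is required. The hypothesis $n\geq 11$ is inherited directly from Theorem~\ref{art2}, which is the only one of the three component theorems imposing it, while the other two theorems are available whenever $n\geq 7$ and $n\geq 6$ respectively.
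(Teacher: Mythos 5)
Your proposal does not address the statement it was asked to prove. The statement is Lemma~\ref{art21}, the three basic identities for the permanental sum: multiplicativity over disjoint unions, the edge-deletion recurrence $PS(G)=PS(G-uv)+PS(G-u-v)+2\sum_{C_k\in\mathcal{C}(uv)}PS(G-V(C_k))$, and the vertex-deletion recurrence. What you have written instead is a derivation of Theorem~\ref{art4} from Theorems~\ref{art1}, \ref{art2} and~\ref{art3}. Nothing in your argument touches the combinatorics of Sachs subgraphs or the coefficients $b_k(G)$, which is what a proof of Lemma~\ref{art21} requires. (For the record, the paper itself gives no proof of this lemma either; it is imported from the cited reference of Wu and Lai.)

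To actually prove the lemma you would work from the expansion $PS(G)=\sum_k\sum_H 2^{c(H)}$ over all Sachs subgraphs $H$ of $G$. For part $(i)$, every Sachs subgraph of $G\cup H$ splits uniquely into a Sachs subgraph of $G$ and one of $H$, and $c$ is additive, so $2^{c(\cdot)}$ multiplies; summing gives $PS(G)PS(H)$. For part $(ii)$, partition the Sachs subgraphs of $G$ into those not using the edge $uv$ (counted by $PS(G-uv)$), those containing $uv$ as a $K_2$-component (counted by $PS(G-u-v)$), and those containing $uv$ inside a cycle $C_k$; in the last case the remaining components form an arbitrary Sachs subgraph of $G-V(C_k)$ and the extra cycle contributes a factor of $2$, giving $2\sum_{C_k\in\mathcal{C}(uv)}PS(G-V(C_k))$. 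Part $(iii)$ is the analogous partition according to how the vertex $v$ is covered: not at all, by an edge $vu$ with $u\in N_G(v)$, or by a cycle through $v$. Your Fibonacci-basis comparison of the three class-wise bounds, while a reasonable sketch for Theorem~\ref{art4}, is simply not relevant here.
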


\begin{defi}(\cite{wuso1})\label{art22}
Let $v$ be  a vertex of  a graph $G\neq P_{1}$.
$G_{1}$ denotes the graph obtained from identifying $v$ with the vertex $u$ of a tree $T$ of order $|T|>2$.
$G_{2}$  is obtained from  identifying $v$ with the pendant vertex $u$ of a path $P_{|T|}$.
We designate the transformation from $G_{1}$
to $G_{2}$ as type {\bf I}.
\end{defi}
\begin{theorem}(\cite{wuso1})\label{art23}
Let $G_{1}$ and $G_{2}$ be the graphs  defined in
Definition \ref{art22}. Then $PS(G_{1})<PS(G_{2})$.
\end{theorem}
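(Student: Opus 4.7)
The plan is to prove this by induction on $n := |T|$, the order of the attached tree, using the deletion recurrences of Lemma~\ref{art21} together with the product rule $PS(G \cup H) = PS(G)\,PS(H)$.

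For the base case $n = 3$, the only tree is $P_3$, and the nontrivial configuration has $u$ at the center (otherwise $G_1 = G_2$ and there is nothing to prove). Applying Lemma~\ref{art21}(iii) at each pendant of $T$ in $G_1$ and at the far endpoint of the attached path in $G_2$ gives $PS(G_1) = PS(G) + 2\,PS(G - v)$ and $PS(G_2) = 2\,PS(G) + PS(G - v)$, so the difference is $PS(G) - PS(G - v)$. This is strictly positive: as soon as $v$ has at least one neighbor $w$ in $G$, the single edge $vw$ is a Sachs subgraph of $G$ not present in $G - v$, which is the non-degenerate situation contemplated by the hypothesis $G \neq P_1$.

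For the inductive step with $n \geq 4$ and $T$ not already equal to $P_n$ with $u$ as a pendant, I would choose a pendant $t$ of $T$ at maximum distance from $u$, let $s$ denote its unique neighbor, and apply Lemma~\ref{art21}(iii) at $t$ (no cycle of $G_1$ passes through $t$, since $t$ has degree $1$) to obtain $PS(G_1) = PS(G_1 - t) + PS(G_1 - t - s)$. The analogous recurrence at the farthest pendant $p$ of the attached path in $G_2$ (with neighbor $p'$) gives $PS(G_2) = PS(G_2 - p) + PS(G_2 - p - p')$. Now $G_1 - t$ is $G$ with the tree $T - t$ of order $n - 1$ attached at $v$ via $u$, whereas $G_2 - p$ is $G$ with $P_{n - 1}$ attached in pendant form at $v$; the inductive hypothesis then supplies $PS(G_1 - t) \leq PS(G_2 - p)$. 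A parallel argument, using Lemma~\ref{art21}(i) to split off any component disconnected from $G$ by removing $s$, gives $PS(G_1 - t - s) \leq PS(G_2 - p - p')$. Summing yields $PS(G_1) \leq PS(G_2)$, and strict inequality is forced, since equality in both summands would entail $T$ being $P_n$ with $u$ at one end.

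The main obstacle is the degenerate case $s = u$, which arises precisely when the farthest pendant of $T$ is adjacent to the identified vertex, for instance when $T$ is a star centered at $u$. There, removing $s = u$ also removes $v$, so $G_1 - t - s$ becomes the disjoint union $(G - v) \cup (T - t - u)$, and the inductive hypothesis does not apply directly to the second summand. I would treat such configurations separately by computing both $PS(G_1)$ and $PS(G_2)$ in closed form as linear combinations of $PS(G)$ and $PS(G - v)$ with Fibonacci-type coefficients on the path side, and then observing that the resulting inequality between these coefficients is elementary.
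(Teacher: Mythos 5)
The paper does not actually prove Theorem~\ref{art23}: it is imported verbatim from \cite{wuso1}, so there is no in-house argument to compare yours against. Judged on its own, your induction is a viable route and most of it checks out. The base case computation is correct; choosing the leaf $t$ at maximum distance from $u$ does confine the degenerate case $s=u$ to the star centred at $u$ (eccentricity $1$ forces a star), and there the closed forms $PS(G_1)=PS(G)+(n-1)PS(G-v)$ versus $PS(G_2)=F(n)PS(G)+F(n-1)PS(G-v)$ settle the comparison since $PS(G)>PS(G-v)>0$; the strictness bookkeeping (equality in both summands forces $(T,u)$ to be a path with $u$ an endpoint) also survives scrutiny.

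The one place where you assert more than you establish is the second-summand inequality $PS(G_1-t-s)\le PS(G_2-p-p')$. This is not a ``parallel argument'': in general $G_1-t-s$ is the disjoint union of ($G$ with the component $T''$ of $T-t-s$ containing $u$ attached at $v$) and a nonempty forest $F$, and after invoking the inductive hypothesis on $T''$ and Lemma~\ref{newwul2} on $F$ you still need a concatenation inequality of the form $PS(H_a)\,PS(P_b)<PS(H_{a+b})$, where $H_k$ denotes $G$ with a pendant path of $k$ new vertices at $v$, in order to absorb $F$ into the pendant path. That inequality is true --- deleting the edge between the $a$-th and $(a+1)$-th new vertices and applying Lemma~\ref{art21}$(ii)$ gives $PS(H_{a+b})=PS(H_a)PS(P_b)+PS(H_{a-1})PS(P_{b-1})$, and the second term is positive --- but it is a genuine auxiliary lemma that your write-up must state and prove, and it is also precisely what drives the equality analysis of the second summand. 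With that lemma supplied and the chain of estimates for $PS(G_1-t-s)$ written out, the proof is complete.
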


\begin{defi}\label{art24}
Let $G$ be a graph, and let   $P_{p+2}=uu_{1}u_{2}\ldots u_{p}v\in V(G)$ be a path in $G$ with  $d(u_{i})=2$, where $i=1,2,\ldots, p$ and $p\geq 2$. Assume that  $G_{1}$ is a graph obtained by respectively attaching two pendant vertices of path $P_{s}(s\geq1)$ and $P_{t}(t\geq1)$ to $u_{i}$ and $u_{p}$ of $P_{p}$ in $G$.  Suppose that  $G_{2}$ is a graph obtained from $G_{1}$ by  deleting the path $P_{s}$ and attaching the path $P_{s}$ to the end of $P_{t}$ in $G_{1}$. Suppose that  $G_{3}$ is a graph obtained from $G_{1}$ by  deleting the path $P_{t}$ and attaching the path $P_{t}$ to the end of $P_{s}$ in $G_{1}$. We designate the transformation from $G_{1}$
to $G_{2}$ or $G_{3}$ as type {\bf II}.
\end{defi}
\begin{theorem}\label{art25}
Let $G_{i}$ be the graphs  defined in
Definition \ref{art24}. Then $PS(G_{1})<PS(G_{2})$ or $PS(G_{1})<PS(G_{3})$.
\end{theorem}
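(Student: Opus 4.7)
The plan is to reduce the two pendant paths in $G_1, G_2, G_3$ using Lemma~\ref{art21}(iii), compare the resulting uniform expressions, and finish by a dichotomy on the sign of $PS(G-u_p)-PS(G-u_i)$.

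Let $f(n):=PS(P_n)$ with the convention $f(0)=1$ and $f(-1)=0$; repeated application of Lemma~\ref{art21}(iii) at the pendant vertex on the far end of a pendant path gives the standard pendant-path reduction $PS(H[w,k])=f(k)PS(H)+f(k-1)PS(H-w)$, where $H[w,k]$ denotes the graph obtained from $H$ by attaching $P_k$ at $w$. Applying this formula twice (once for each attached path, which is legitimate since $P_s$ and $P_t$ are disjoint trees) and setting $\alpha=PS(G)$, $\beta=PS(G-u_p)$, $\gamma=PS(G-u_i)$, $\delta=PS(G-u_i-u_p)$, I obtain
\begin{align*}
PS(G_1) &= f(s)f(t)\alpha+f(s)f(t-1)\beta+f(s-1)f(t)\gamma+f(s-1)f(t-1)\delta,\\
PS(G_2) &= f(s+t)\alpha+f(s+t-1)\beta,\\
PS(G_3) &= f(s+t)\alpha+f(s+t-1)\gamma.
\end{align*}
Using the Fibonacci identity $f(m+n)=f(m)f(n)+f(m-1)f(n-1)$ and $f(t)=f(t-1)+f(t-2)$, the two differences simplify to
\begin{align*}
PS(G_2)-PS(G_1) &= f(s-1)\bigl[f(t-1)(\alpha-\gamma-\delta)+f(t-2)(\beta-\gamma)\bigr],\\
PS(G_3)-PS(G_1) &= f(t-1)\bigl[f(s-1)(\alpha-\beta-\delta)+f(s-2)(\gamma-\beta)\bigr].
\end{align*}

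The core step is to verify the structural inequalities $\alpha>\beta+\delta$ and $\alpha>\gamma+\delta$. Apply Lemma~\ref{art21}(iii) to the degree-$2$ vertex $u_p$, whose neighbours in $G$ are $u_{p-1}$ and $v$:
\[
\alpha = \beta + PS(G-u_p-u_{p-1}) + PS(G-u_p-v) + 2\!\sum_{C\in\mathcal{C}(u_p)}PS(G-V(C)).
\]
Using that each $u_j$ with $j\in\{i,i+1,\dots,p-1\}$ has degree $2$, the graph $G-u_p$ is precisely $R:=G-\{u_i,\dots,u_p\}$ with a pendant path of $p-i$ vertices attached at $u_{i-1}$; invoking the pendant-path formula one more time then yields $PS(G-u_p-u_{p-1})\geq\delta$ (with equality only when $p-i=1$) and $PS(G-u_p-v)\geq 1$, since $R-v$ is nonempty (it contains $u$). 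Hence $\alpha-\beta-\delta>0$, and by the symmetric argument applied at $u_i$ one also gets $\alpha-\gamma-\delta>0$.

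To conclude, if $\beta\geq\gamma$ then $f(t-2)(\beta-\gamma)\geq 0$, so the bracket for $PS(G_2)-PS(G_1)$ is at least $f(t-1)(\alpha-\gamma-\delta)>0$, and since $f(s-1)\geq 1$ we obtain $PS(G_1)<PS(G_2)$; if instead $\gamma\geq\beta$, the analogous reasoning applied to $PS(G_3)-PS(G_1)$ gives $PS(G_1)<PS(G_3)$. Either way the ``or'' conclusion of the theorem holds. The main obstacle I anticipate is the structural step $\alpha-\beta-\delta>0$: it depends on carefully exploiting the degree-$2$ chain $u_iu_{i+1}\cdots u_p$, and the small cases $p-i=1$, $s=1$, or $t=1$ (where some $f$-values vanish) must be checked separately to ensure no spurious cancellations ruin the positivity.
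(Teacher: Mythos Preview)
Your argument is correct and is actually cleaner than the paper's own proof. The paper decomposes $G_1,G_2,G_3$ by deleting the \emph{base} edges $uu_1$ and $u_pv$ via Lemma~\ref{art21}(ii), which drags in the cycle sums $\sum_{C\in\mathcal C(uu_1)}PS(G-V(C))$ and forces the computation through the four quantities $W_1,\dots,W_4$ before a dichotomy on $W_2$ versus $W_3$. You instead peel the pendant paths off from their \emph{tips} via the pendant-path formula, which keeps the expressions cycle-free and reduces everything to the four values $\alpha,\beta,\gamma,\delta$; your dichotomy on $\beta\gtrless\gamma$ then closes the proof in one line. The structural step $\alpha>\beta+\delta$ (and its mirror) is the genuine new ingredient you need beyond the paper's proof, and your verification of it---expanding at $u_p$, recognising $G-u_p-u_{p-1}$ as $R$ with a shorter pendant path, and invoking $PS(G-u_p-v)\ge 1$---is sound; just note that when $i=1$ the attachment vertex you call $u_{i-1}$ should be read as $u$. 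A small bonus of your route is that it works verbatim for any position $u_i$ of the first pendant path, whereas the paper's explicit path lengths ($P_{s+p+t}$, $P_{s+p-1}$, etc.) tacitly assume $i=1$.
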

\begin{proof}
For convenience, suppose that $W_{1}=PS(G-\{u_{1}, u_{2},\ldots, u_{p}\})$, $W_{2}=PS(G-\{u_{1}, u_{2},\ldots,\\ u_{p}, u\})$, $W_{3}=PS(G-\{u_{1}, u_{2},\ldots, u_{p}, v\})$ and $W_{4}=PS(G-\{u_{1}, u_{2},\ldots, u_{p}, u, v\})$.

By Lemma \ref{art21}, we obtain that
\begin{eqnarray}\label{equ21}
PS(G_{1})&=&PS(G_{1}-uu_{1})+PS(G_{1}-\{u,u_{1}\})+2\sum_{C\in~{\cal C}_{G_{1}(uu_{1})}} PS(G_{1}-V(C))\nonumber\\
&=&PS(G_{1}-uu_{1}-u_{p}v)+PS(G_{1}-uu_{1}-\{u_{p},v\})+PS(G_{1}-\{u,u_{1}\}-u_{p}v)\nonumber\\
&&+PS(G_{1}-\{u,u_{1}\}-\{u_{p},v\})+2\sum_{C\in~{\cal C}_{G_{1}(uu_{1})}} PS(G_{1}-V(C))\nonumber\\
&=&W_{1}PS(P_{s+p+t})+W_{3}PS(P_{s+p-1})PS(P_{t})+W_{2}PS(P_{s})PS(P_{p+t-1})\\
&&+W_{4}PS(P_{s})PS(P_{t})PS(P_{p-2})+2PS(P_{t})PS(P_{s})\sum_{C\in~{\cal C}_{G(uu_{1})}} PS(G-V(C)),\nonumber
\end{eqnarray}

\begin{eqnarray}\label{equ22}
PS(G_{2})&=&PS(G_{2}-uu_{1})+PS(G_{2}-\{u,u_{1}\})+2\sum_{C\in~{\cal C}_{G_{2}(uu_{1})}} PS(G_{2}-V(C))\nonumber\\
&=&PS(G_{2}-uu_{1}-u_{p}v)+PS(G_{2}-uu_{1}-\{u_{p},v\})+PS(G_{2}-\{u,u_{1}\}-u_{p}v)\nonumber\\
&&+PS(G_{2}-\{u,u_{1}\}-\{u_{p},v\})+2\sum_{C\in~C_{G_{2}(uu_{1})}} PS(G_{2}-V(C))\nonumber\\
&=&W_{1}PS(P_{s+p+t})+W_{3}PS(P_{p-1})PS(P_{s+t})+W_{2}PS(P_{p+t+s-1})\\
&&+W_{4}PS(P_{s+t})PS(P_{p-2})+2PS(P_{t+s})\sum_{C\in~{\cal C}_{G(uu_{1})}} PS(G-V(C))\nonumber
\end{eqnarray}
and
\begin{eqnarray}\label{equ23}
PS(G_{3})&=&PS(G_{3}-uu_{1})+PS(G_{3}-\{u,u_{1}\})+2\sum_{C\in~{\cal C}_{G_{3}(uu_{1})}} PS(G_{3}-V(C))\nonumber\\
&=&PS(G_{3}-uu_{1}-u_{p}v)+PS(G_{3}-uu_{1}-\{u_{p},v\})+PS(G_{3}-\{u,u_{1}\}-u_{p}v)\nonumber\\
&&+PS(G_{3}-\{u,u_{1}\}-\{u_{p},v\})+2\sum_{C\in~{\cal C}_{G_{3}(uu_{1})}} PS(G_{3}-V(C))\nonumber\\
&=&W_{1}PS(P_{s+p+t})+W_{3}PS(P_{s+t+p-1})+W_{2}PS(P_{s+t})PS(P_{p-1})\\
&&+W_{4}PS(P_{s+t})PS(P_{p-2})+2PS(P_{t+s})\sum_{C\in~{\cal C}_{G(uu_{1})}} PS(G-V(C)).\nonumber
\end{eqnarray}

Suppose that $W_{2}> W_{3}$. By Lemma \ref{art21},  (\ref{equ21}) and (\ref{equ22}), we have
\begin{eqnarray}\label{equ24}
&&PS(G_{2})-PS(G_{1})\nonumber\\
&=&W_{3}[PS(P_{p-1})PS(P_{t+s})-PS(P_{s+p-1})PS(P_{t})]
+W_{2}[PS(P_{t+p+s-1})-PS(P_{s})\nonumber\\
&&\times PS(P_{p+t-1})]
+W_{4}[PS(P_{p-2})PS(P_{s+t})-PS(P_{p-2})PS(P_{s})PS(P_{t})]
+2[PS(P_{t+s}))\nonumber\\
&&-PS(P_{t})PS(P_{s})]\sum_{C\in~{\cal C}_{G(uu_{1})}} PS(G-V(C))\nonumber\\
&=&W_{3}[PS(P_{s-1})PS(P_{t-1})PS(P_{p-1})-PS(P_{s-1})PS(P_{t})PS(P_{p-2})]+W_{2}[PS(P_{s-1})\nonumber\\
&&\times PS(P_{t-1})PS(P_{p-1})+PS(P_{s-1})PS(P_{t-2})PS(P_{p-2})]
+W_{4})PS(P_{p-2})PS(P_{s-1})\nonumber\\
&&\times PS(P_{t-1})+2PS(P_{t-1})PS(P_{s-1})\sum_{C\in~{\cal C}_{G(uu_{1})}}PS(G-V(C))\nonumber\\
&=&W_{3}[F(s)F(t)F(p)-F(s)F(t+1)F(p-1)]
+W_{2}[F(s)F(t)F(p)+F(s)F(t-1)\nonumber\\
&&\times F(p-1)]+W_{4}F(p-1)F(s)F(t)
+2F(t)F(s)\sum_{C\in~{\cal C}_{G(uu_{1})}} PS(G-V(C)).
\end{eqnarray}
By (\ref{equ24}), if $F(s)F(t)F(p)-F(s)F(t+1)F(p-1)\geq0$ then $PS(G_{2})>PS(G_{1})$, if $F(s)F(t)F(p)-F(s)F(t+1)F(p-1)<0$ then \begin{eqnarray*}
PS(G_{2})-PS(G_{1})&>&W_{2}[F(s)F(t)F(p)-F(s)F(t+1)F(p-1)+F(s)F(t)F(p)\\
&&+F(s)F(t-1)F(p-1)]+W_{4}F(p-1)F(s)F(t)\\
&&+2F(t)F(s)\sum\limits_{C\in~{\cal C}_{G(uu_{1})}} PS(G-V(C))\\
&=&W_{2}[2F(s)F(t)F(p)-F(s)F(p-1)F(t)]+W_{4}F(p-1)F(s)F(t)\\
&&+2F(t)F(s)\sum\limits_{C\in~{\cal C}_{G(uu_{1})}} PS(G-V(C))>0.
\end{eqnarray*}
Thus, $PS(G_{2})>PS(G_{1})$.

Suppose that $W_{2}\leq W_{3}$. By Lemma \ref{art21},  (\ref{equ21}) and (\ref{equ23}), we have

\begin{eqnarray}\label{equ25}
&&PS(G_{3})-PS(G_{1})\nonumber\\
&=&W_{3}[PS(P_{s+t+p-1})-PS(P_{s+p-1})PS(P_{t})]
+W_{2}[PS(P_{s+t})PS(P_{p-1})\nonumber\\
&&-PS(P_{s})PS(P_{p+t-1})]
+W_{4}[PS(P_{s+t})PS(P_{p-2})-PS(P_{s})PS(P_{t})PS(P_{p-2})]\nonumber\\
&&+2[PS(P_{s+t})-PS(P_{s})PS(P_{t})\sum_{C\in~{\cal C}_{G(uu_{1})}} PS(G-V(C))\nonumber\\
&=&W_{3}[PS(P_{s-1})PS(P_{t-1})PS(P_{p-1})+PS(P_{s-2})PS(P_{t-1})PS(P_{p-2})]
+W_{2}\nonumber\\
&&\times[PS(P_{s-1})PS(P_{t-1})PS(P_{p-1})
-PS(P_{s})PS(P_{t-1})PS(P_{p-2})]
+W_{4}PS(P_{p-2})\nonumber\\
&&\times PS(P_{s-1})PS(P_{t-1})+2PS(P_{s-1})PS(P_{t-1})\sum_{C\in~{\cal C}_{G(uu_{1})}} PS(G-V(C))\nonumber\\
&=&W_{3}[F(s)F(t)F(p)+F(s-1)F(t)F(p-1)]+
W_{2}[F(s)F(t)F(p)-F(s+1)F(t)\nonumber\\
&&F(p-1)]+
W_{4}F(p-1)F(s)F(t)+
2F(s)F(t)\sum_{C\in~C_{G(uu_{1})}} PS(G-V(C)).
\end{eqnarray}
By (\ref{equ25}), if $F(s)F(t)F(p)-F(s+1)F(t)F(p-1)\geq0$ then $PS(G_{3})>PS(G_{1})$, if $F(s)F(t)F(p)-F(s+1)F(t)F(p-1)<0$ then
\begin{eqnarray*}
PS(G_{3})-PS(G_{1})&\geq& W_{3}[F(s)F(t)F(p)-F(s+1)F(t)F(p-1)+F(s)F(t)F(p)\\
&&+F(s-1)F(t)F(p-1)]
+W_{4}F(p-1)F(s)F(t)\\
&&+2F(s)F(t)\sum\limits_{C\in~{\cal C}_{G(uu_{1})}} PS(G-V(C))\\
&=&W_{3}[2F(s)F(t)F(p)-F(s)F(t)F(p-1)]
+W_{4}[F(p-1)F(s)F(t)\\
&&+2F(s)F(t)\sum\limits_{C\in~C_{G(uu_{1})}} PS(G-V(C))>0.
\end{eqnarray*}
So, $PS(G_{3})>PS(G_{1})$.
\end{proof}

\begin{lemma}\label{art210} {\rm \cite{wu1}}
$F(n)=F(k)F(n-k+1)+F(k-1)F(n-k)$ for $1\leq k\leq n$.
\end{lemma}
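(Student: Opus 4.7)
The plan is to derive this Fibonacci-type convolution identity directly from the graph-theoretic reduction formulas already available in the paper, rather than treating it as a pure number-theoretic fact. Throughout the proof of Theorem~\ref{art25} the symbol $F(m)$ is used in lock-step with $PS(P_{m-1})$, so I would adopt $F(m)=PS(P_{m-1})$ for $m\ge 1$ (so that $F(1)=PS(P_0)=1$) and extend by the convention $F(0)=0$ to make the boundary cases of the identity formally correct. The basic recurrence $F(m+1)=F(m)+F(m-1)$ is then just Lemma~\ref{art21}(iii) applied at a pendant vertex of $P_{m-1}$, since paths carry no cycles.

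For the main range $2\le k\le n-2$, I would apply Lemma~\ref{art21}(iii) at the \emph{interior} vertex $v_k$ of $P_{n-1}=v_1v_2\cdots v_{n-1}$. The cycle sum vanishes and the three remaining terms are the permanental sums of the disjoint unions $P_{k-1}\cup P_{n-1-k}$, $P_{k-2}\cup P_{n-1-k}$, and $P_{k-1}\cup P_{n-2-k}$. Invoking the multiplicativity of $PS$ on disjoint unions (Lemma~\ref{art21}(i)) and translating back to the $F$-notation gives
\[
F(n)\;=\;F(k)F(n-k)\;+\;F(k-1)F(n-k)\;+\;F(k)F(n-k-1).
\]
Grouping the first and third summands and using the already-established recurrence $F(n-k)+F(n-k-1)=F(n-k+1)$ then produces $F(n)=F(k)F(n-k+1)+F(k-1)F(n-k)$, as claimed.

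The remaining boundary values $k\in\{1,n-1,n\}$ need only a short, separate check. For $k=1$ and $k=n$ the identity reads $F(n)=F(n)+0$ thanks to the convention $F(0)=0$, and for $k=n-1$ it collapses to the basic Fibonacci recurrence itself, already recorded above. There is no substantive obstacle in any of this; the only decision that requires thought is choosing the interior vertex $v_k$ so that its removal yields two paths of the correct lengths, after which the identity assembles mechanically from Lemmas~\ref{art21}(i) and~\ref{art21}(iii).
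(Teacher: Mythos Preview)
Your argument is correct. Note, however, that the paper does not supply its own proof of this lemma: it is quoted verbatim from \cite{wu1} and used as a known identity, so there is no in-paper proof to compare against. Your derivation via Lemma~\ref{art21}(iii) applied at the interior vertex $v_k$ of $P_{n-1}$, together with the multiplicativity in Lemma~\ref{art21}(i), is a clean and context-appropriate way to recover the identity using only tools already present in the paper; the boundary checks at $k\in\{1,n-1,n\}$ with the convention $F(0)=0$ are handled correctly.
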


\begin{lemma}\label{newwul2} {\rm \cite{wu1}}
Let $G$ be a forest with $n$ vertices. Then $PS(G)\leq PS(P_{n})$ with equality holding if and only if $G\cong P_{n}$.
\end{lemma}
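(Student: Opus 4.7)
My plan is to induct on $n$. Since $G$ is a forest it contains no cycles, hence $\mathcal{C}(v) = \emptyset$ for every vertex $v$, and Lemma \ref{art21}(iii) applied to a pendant $v$ with unique neighbor $u$ collapses to
\[
PS(G) = PS(G-v) + PS(G-v-u).
\]
The path $P_{n}$ satisfies the same recurrence at an endpoint and has $PS(P_{n}) = F(n+1)$, so the target value obeys exactly the Fibonacci-type recursion that will drive the induction.

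After trivial base cases $n \leq 2$, the inductive step splits into two. If $G$ is disconnected, write $G = T_{1} \cup \cdots \cup T_{k}$ with $k \geq 2$. By Lemma \ref{art21}(i) and the inductive hypothesis, $PS(G) = \prod_{i} PS(T_{i}) \leq \prod_{i} F(|T_{i}|+1)$. Using Lemma \ref{art210} in the form $F(a+b+1) = F(a+1)F(b+1) + F(a)F(b)$ gives the strict inequality $F(a+1)F(b+1) < F(a+b+1)$ for $a, b \geq 1$. Iterating this pairwise over components collapses the product down to $F(n+1) = PS(P_{n})$ with strict inequality whenever $k \geq 2$.

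If instead $G$ is a tree with $G \not\cong P_{n}$, pick any pendant $v$ with neighbor $u$. The inductive hypothesis delivers $PS(G-v) \leq F(n)$ and $PS(G-v-u) \leq F(n-1)$, with equality in each iff the corresponding subgraph is a path. If both equalities held simultaneously, then $G - v \cong P_{n-1}$ and removing $u$ from this path leaves $P_{n-2}$, which forces $u$ to be an endpoint of $P_{n-1}$; reattaching the pendant $v$ at this endpoint then rebuilds $P_{n}$, contradicting $G \not\cong P_{n}$. Hence at least one of the two inductive inequalities is strict, and
\[
PS(G) = PS(G-v) + PS(G-v-u) < F(n) + F(n-1) = F(n+1) = PS(P_{n}).
\]

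The main subtlety, and really the only place where care is needed, is this equality analysis at the end: one has to verify that the only tree whose pendant-removal produces a path and whose pendant-plus-neighbor removal also produces a path is $P_{n}$ itself. Once that small structural observation is in place, everything else reduces to routine bookkeeping with the Fibonacci identity of Lemma \ref{art210} and the multiplicativity from Lemma \ref{art21}(i).
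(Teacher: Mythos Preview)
Your induction argument is correct in all details: the pendant-deletion recurrence from Lemma~\ref{art21}(iii), the multiplicativity from Lemma~\ref{art21}(i), the Fibonacci identity from Lemma~\ref{art210}, and the equality analysis forcing $u$ to be an endpoint of $G-v\cong P_{n-1}$ are all sound. Note, however, that the paper does not actually prove this lemma---it is simply quoted from \cite{wu1} without argument---so there is no in-paper proof to compare your approach against. What you have written is the natural and standard route to this result.
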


\begin{lemma}\label{new3}
Let $H$ be a forest of order $n(\geq 4)$ containing at least two components. Then
\begin{eqnarray*}
PS(H)\leq 2F(n-1),
\end{eqnarray*}
where the equality holds if and only if $H=P_{2}\cup P_{n-2}$.
\end{lemma}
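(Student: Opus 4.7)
The plan is to reduce the statement to a combinatorial inequality on products of Fibonacci numbers and then resolve it via two consequences of Lemma \ref{art210}. Let the components of $H$ be trees $T_1, \ldots, T_k$ with $|T_i| = n_i$, so $k \geq 2$ and $n_1 + \cdots + n_k = n$. By Lemma \ref{art21}(i) and Lemma \ref{newwul2} (and the standard recursion $PS(P_n) = PS(P_{n-1}) + PS(P_{n-2})$, which gives $PS(P_n) = F(n+1)$),
\begin{equation*}
PS(H) = \prod_{i=1}^{k} PS(T_i) \leq \prod_{i=1}^{k} PS(P_{n_i}) = \prod_{i=1}^{k} F(n_i+1),
\end{equation*}
with equality iff every $T_i$ is a path. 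It therefore suffices to prove $\prod_{i=1}^{k} F(n_i+1) \leq 2F(n-1)$ for $k \geq 2$, with equality iff $k=2$ and $\{n_1,n_2\} = \{2,n-2\}$.

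The first key identity, obtained from Lemma \ref{art210} with $n \mapsto a+b+1$ and $k \mapsto a+1$, is
\begin{equation*}
F(a+b+1) = F(a+1)F(b+1) + F(a)F(b), \qquad a,b \geq 1,
\end{equation*}
which gives the strict inequality $F(a+1)F(b+1) < F(a+b+1)$. Thus merging two path components into one strictly increases $\prod F(n_i+1)$. Iterating this operation reduces any composition with $k \geq 3$ to a two-part composition with strictly larger product, so once the $k=2$ bound is established, strict inequality for $k \geq 3$ is automatic.

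For $k = 2$, assume WLOG $1 \leq n_1 \leq n_2$ with $n_1 + n_2 = n$. The case $n_1 = 1$ gives $F(2)F(n) = F(n) = F(n-1) + F(n-2) < 2F(n-1)$ since $n \geq 4$ implies $F(n-2) < F(n-1)$; the case $n_1 = 2$ gives exactly $F(3)F(n-1) = 2F(n-1)$, attaining equality. For $n_1 \geq 3$ (so $n \geq 6$), the identity above together with $2F(n-1) = F(n+1) - F(n-2)$ rewrites the gap as $2F(n-1) - F(n_1+1)F(n_2+1) = F(n_1)F(n_2) - F(n-2)$. A second application of Lemma \ref{art210} (with $n \mapsto a+b-2$, $k \mapsto a-1$) telescopes to
\begin{equation*}
F(a)F(b) - F(a+b-2) = F(a-2)F(b-2),
\end{equation*}
which for $a = n_1$, $b = n_2$ with both $\geq 3$ is strictly positive. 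Combining the three steps, equality in the original bound forces $k = 2$, both components to be paths, and $\{n_1,n_2\} = \{2,n-2\}$, giving $H \cong P_2 \cup P_{n-2}$.

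The main obstacle is spotting the clean telescoping identity handling the $n_1 \geq 3$ subcase; a direct induction establishing $F(a)F(b) \geq F(a+b-2)$ does the job for the bound itself, but the exact identity $F(a)F(b) - F(a+b-2) = F(a-2)F(b-2)$ simultaneously pins down positivity and the equality locus in a single line, which is what makes the uniqueness clause painless.
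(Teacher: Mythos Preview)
Your proof is correct and follows essentially the same route as the paper's: reduce to unions of paths via Lemma~\ref{art21}(i) and Lemma~\ref{newwul2}, collapse $k\geq 3$ components to two, and settle the two-part case using Fibonacci identities from Lemma~\ref{art210}. The only cosmetic difference is that the paper packages the $k=2$ gap as the single product $[F(m-1)-F(m)][F(t-1)-F(t)]$, whereas you split into $n_1=1,2,\geq 3$ and use the equivalent identity $F(a)F(b)-F(a+b-2)=F(a-2)F(b-2)$.
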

\begin{proof}
Suppose that $H$ has $2$ or $3$ vertices. The proof is straightforward. The following we set $n\geq 4$, and we consider two cases.

\textbf{Case 1.} Suppose that $H=T_{m}\cup~T_{t}$ contains two components. Without loss of generality,
Assume that $ m\leq t$ and $m+t=n$.
By Lemmas \ref{art21} and \ref{newwul2}, we obtain that
 $PS(H)=PS(T_{m})PS(T_{t})\leq F(m+1)F(t+1)$
with equality holding if and only if $H=P_{m}\cup P_{t}$, and $PS(P_{2}\cup P_{n-2})=2F(n-1)$.
Set $m\neq 2$. By Lemma \ref{art210}, we get that $F(m+1)F(t+1)-2F(n-1)=[F(m-1)-F(m)][F(t-1)-F(t)]<0$. So, $PS(T_{m}\cup T_{t})\leq2F(n-1)$, where the equality holds if and only if $T_{m}\cup T_{t}=_{2}\cup~P_{n-2}$.

\textbf{Case 2.} Assume that $H=T_{1}\cup~T_{2}\cup\cdots\cup T_{i}$ contains at least three components, and assume that  $T_{i}$ has $|T_{i}|$ vertices. By Lemma \ref{newwul2}, we have
$PS(H)<PS(P_{|T_{1}|}\cup~P_{|T_{2}|+\cdots+|T_{i}|})$.
By Lemma \ref{art21}, we get that $PS(H)<2F(n-1) $.
\end{proof}

\begin{lemma}\label{newwl} {\rm \cite{wu1}}
Let $G\in\mathscr U_{n}$ be a graph with $n\geq5$ vertices. Then $PS(G)\leq~6F(n-2)+2F(n-3)$ with equality holding if and only if $G\cong~D(3,n-3)$.
\end{lemma}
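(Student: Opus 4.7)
The plan is to reduce an arbitrary $G \in \mathscr U_n$ to $D(3, n-3)$ through a sequence of $PS$-increasing transformations, and then verify the claimed value of $PS(D(3, n-3))$ directly.

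First I would straighten the branches: any $G \in \mathscr U_n$ decomposes as a cycle $C_r$ with rooted trees attached at some cycle vertices, and applying Theorem~\ref{art23} (Type~I) to each non-path branch replaces it with a path of the same order while strictly increasing $PS$. So I may assume every branch hanging off the cycle is a pendant path. Next I would coalesce the pendant paths onto a single cycle vertex: if pendant paths $P_s, P_t$ are attached at distinct cycle vertices $u, v$, the shorter arc from $u$ to $v$ along $C_r$ plays the role of the subpath $P_{p+2}$ in Definition~\ref{art24}, provided its internal vertices have degree $2$ in $G$. Any inner pendant-bearer should be handled first by the same mechanism, inducting on the number of pendant-bearing cycle vertices; then Theorem~\ref{art25} (Type~II) lets us strictly increase $PS$ by moving one pendant path onto the end of the other. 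Iterating, $G$ is reduced to the form $D(r, n-r)$ for some $3 \le r \le n-1$.

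Finally I would optimize over the cycle length $r$. Applying Lemma~\ref{art21}(iii) at the pendant end of the path in $D(r, m)$ (with $m = n - r$) yields the Fibonacci-type recursion
\[
PS(D(r, m)) = PS(D(r, m-1)) + PS(D(r, m-2)), \quad m \ge 2,
\]
with base values $PS(D(r, 0)) = PS(C_r) = F(r-1) + F(r+1) + 2$ and $PS(D(r, 1)) = PS(C_r) + F(r)$ (the latter from Lemma~\ref{art21}(iii) applied to the single pendant). Solving gives the closed form
\[
PS(D(r, n-r)) = PS(C_r)\, F(n-r+1) + F(r)\, F(n-r),
\]
and a routine application of Lemma~\ref{art210} shows this expression is strictly decreasing in $r$ on $3 \le r \le n-1$. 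Hence the maximum occurs at $r = 3$ and equals $6 F(n-2) + 2 F(n-3)$, with equality in the overall chain iff no transformation above was strict, i.e. $G \cong D(3, n-3)$.

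The main obstacle is the merging step: Theorem~\ref{art25} requires a subpath whose internal vertices have degree exactly $2$ in the ambient graph, which can fail when several cycle vertices simultaneously carry pendant paths. The merges must therefore be sequenced carefully, for instance by first consolidating adjacent pendant-bearing vertices and working outward. A cleaner alternative is to open the cycle via Lemma~\ref{art21}(ii) on a cycle edge $e$, reducing the bound to a tree estimate on $G - e$ (Lemma~\ref{newwul2}) plus a forest estimate on $G - V(e)$ (Lemma~\ref{new3}) plus the residual cycle contribution $2\, PS(G - V(C))$; the challenge there is to preserve the equality case across the three separate bounds.
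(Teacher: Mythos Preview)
The paper does not give a proof of Lemma~\ref{newwl}; it is quoted from \cite{wu1} and used as a black box throughout. (The same source supplies Lemma~\ref{art51}, whose two parts are precisely the reduction to $D(r,n-r)$ for fixed girth and the monotonicity in $r$ that you outline.) So there is no in-paper argument to compare your proposal against.

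Viewed on its own, your strategy is correct in outline, and your closed form $PS(D(r,n-r))=PS(C_r)\,F(n-r+1)+F(r)\,F(n-r)$ is right. The real gap is the one you flag, and it is somewhat worse than stated. Definition~\ref{art24} requires a genuine path $u\,u_{1}\cdots u_{p}\,v$ with $u\ne v$ and $p\ge 2$. On a triangle this is impossible regardless of where the pendants sit, so Type~II never applies when $r=3$ and two or three cycle vertices carry pendant paths. On a $4$-cycle it also fails when the two pendant paths hang from \emph{opposite} vertices: the only candidates for $u$ and $v$ are then the same vertex. Hence Type~II alone cannot complete the merging step for $r=3$ or for the antipodal $r=4$ configuration, and these must be treated by a direct computation or by your alternative edge-deletion route via Lemma~\ref{art21}(ii). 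You should also note that $G\cong C_{n}$ lies outside the range $3\le r\le n-1$ of your monotonicity argument and needs a one-line separate comparison with $6F(n-2)+2F(n-3)$. None of these patches is difficult, but a complete proof would need them written out.
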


\begin{lemma}\label{newwll} {\rm \cite{wuso1}}
Let $G\in\mathscr U_{n}-D(3,n-3)$ be a unicyclic graph with $n$ vertices.
 If $n\geq8$, then $PS(G)\leq~6F(n-2)+4F(n-5)$, where  the equality holds if and only if $G\cong~D^{'}(3,3,n-6)$.
\end{lemma}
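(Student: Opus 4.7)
The plan is to argue by cases on the length $r$ of the cycle of $G$, using Theorems \ref{art23} and \ref{art25} as the main reduction tools, and relying on Lemmas \ref{art210}, \ref{newwul2}, and \ref{new3} for the supporting Fibonacci and forest bounds.

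First I would handle the case $r \geq 4$. By iteratively applying Type I (Theorem \ref{art23}) to every subtree hanging off the cycle and then using Type II (Theorem \ref{art25}), together with ad hoc direct-computation arguments for the configurations where Type II does not apply verbatim (for instance paths attached at opposite vertices of $C_4$), one consolidates all hanging paths onto a single cycle vertex and obtains $PS(G) \leq PS(D(r,n-r))$. The pendant-vertex recurrence from Lemma \ref{art21}(iii) gives $PS(D(r,n-r)) = PS(C_r) F(n-r+1) + F(r) F(n-r)$, and a direct Fibonacci computation based on $F(n-2) = F(n-3) + F(n-4)$ yields the crucial identity $PS(D'(3,3,n-6)) - PS(D(4,n-4)) = F(n-5) > 0$, with the gap only widening for $r \geq 5$; so every $G$ with cycle length at least $4$ satisfies the desired strict inequality.

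The main work is the case $r = 3$, $G \neq D(3,n-3)$. I split this into (a) exactly one vertex $c \in C_3$ carries an attached tree $T$ of order $n-3$, and (b) at least two vertices of $C_3$ carry attached trees. In subcase (a), applying Lemma \ref{art21}(ii) to the bridge edge $cu_0$, where $u_0 \in T$ is the vertex of attachment, yields
\begin{eqnarray*}
PS(G) = 6\,PS(T) + 2\,PS(T - u_0).
\end{eqnarray*}
When $T = P_{n-3}$, $G = D'(3,k,n-k-3)$ for some $k \geq 1$; the exclusion $G \neq D(3,n-3)$ forces $k \geq 2$, and $PS(G) = 6F(n-2) + 2F(k)F(n-k-2)$ is maximized over $k \geq 2$ at $k = 3$ for $n \geq 8$, with equality case $G \cong D'(3,3,n-6)$. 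When $T$ is not a path, I apply Type I to a proper subtree of $T$ rooted at a branching vertex of $T$ other than $u_0$: this strictly increases $PS$ while the ambient graph remains in $\mathscr{B}_n \setminus \{D(3,n-3)\}$, and iteration terminates with $T$ a path attached at an internal vertex, reducing to the previous case. In subcase (b), Type I reduces every attached tree to a pendant path, and then the pendant-vertex recurrence, combined with Lemmas \ref{newwul2} and \ref{new3} applied to the forests that appear after deleting a pendant and its neighbor, produces an explicit upper bound strictly less than $6F(n-2) + 4F(n-5)$.

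The principal obstacle is the iterative Type I reduction in subcase (a) when $T$ is not a path: one must apply Type I only to proper subtrees of $T$ rooted away from $c$, so that $PS$ strictly increases at each step without any single step collapsing $T$ into a pendant path at $c$ (which would land on the excluded graph $D(3,n-3)$). This calls for a structural invariant that tracks the attachment position inside the evolving tree, ensuring that the endpoint of the procedure lies in the family $\{D'(3,k,n-k-3) : k \geq 2\}$ rather than at $D(3,n-3)$.
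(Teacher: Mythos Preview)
This lemma is quoted from \cite{wuso1} and carries no proof in the present paper; there is consequently nothing here against which to compare your proposal.

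On the substance of your sketch: for girth $r\geq4$ you do not need the Type~I/II machinery at all, since Lemma~\ref{art51}(i) (also cited, from \cite{wu1}) already gives $PS(G)\leq PS(D(r,n-r))$ directly, and part (ii) of that lemma handles the monotonicity in $r$. For $r=3$, subcase~(a), you yourself flag the principal gap: iterated Type~I applied to subtrees of $T$ need not terminate at some $D'(3,k,n-k-3)$ with $k\geq2$ rather than at the excluded $D(3,n-3)$, and you have not actually produced the ``structural invariant'' you say is required. That is a genuine hole, not a routine detail, since a single misapplied Type~I step at the attachment vertex collapses $T$ to a pendant path and lands on $D(3,n-3)$. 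A cleaner route in subcase~(a) is to bypass transformations entirely: from $PS(G)=6\,PS(T)+2\,PS(T-u_0)$ and the tree bounds $PS(T)\leq F(n-2)$, $PS(T-u_0)\leq F(n-3)$, one need only characterize when both equalities hold (forcing $T=P_{n-3}$ with $u_0$ a leaf, i.e.\ $G=D(3,n-3)$), and then maximize over the remaining configurations. In any event, the authoritative argument resides in \cite{wuso1}, not in this manuscript.
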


\section{ The proof of Theorem \ref{art1}}
Let $G\in\mathscr{B}^{1}_{n}(p,q)$ be a bicyclic graph with $n$ vertices. Repeatedly applying transformations {\bf I} and {\bf II}, we get that the structure of $G$ is isomorphic to one of $B_{1}(p,q)$, $B_{1}^{1}(p,q,r)$, $B_{1}^{2}(p,q,r)$, $B_{1}^{3}(p,q,r,s)$, $B_{1}^{4}(p,q,r,s)$ and $B_{1}^{5}(p,q,r,s,t)$, see Figure \ref{fig31}. The following we prove that  the permanental sums of these graphs are less than or equal to the permanental sum of $B_{1}^{2}(3,3,n-5)$.
\begin{figure}[htbp]
\begin{center}
\includegraphics[scale=0.45]{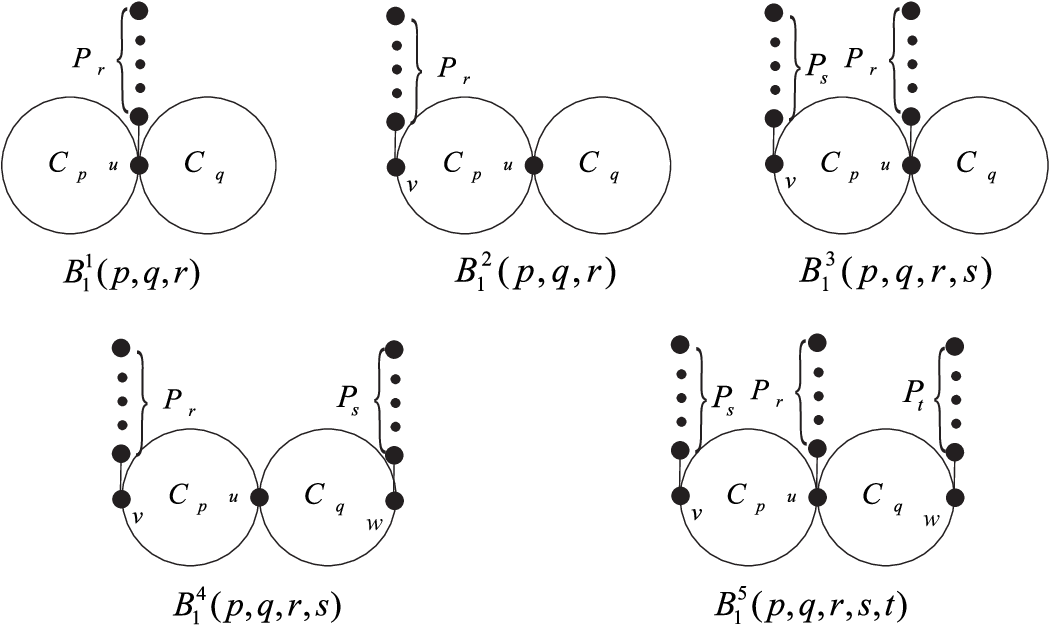}
\caption{\label{fig31}\small {Bicyclic graphs   $B_{1}^{1}(p,q,r)$, $B_{1}^{2}(p,q,r)$, $B_{1}^{3}(p,q,r,s)$, $B_{1}^{4}(p,q,r,s)$ and $B_{1}^{5}(p,q,r,s,t)$.}}
 \end{center}
 \end{figure}
\begin{lemma}\label{art35}
Let $G\in\mathscr B_{n}^{1}(p,q)$ be a graph with $n\geq6$ vertices, and let $w$ be a vertex in induced subgraph $B_{1}(p,q)$ of $G$. Then $PS(G-w)<PS(B_{1}^{2}(3,3,n-6))$.
\end{lemma}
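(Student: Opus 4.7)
The plan is to split into cases by the location of $w$ inside $B_{1}(p,q)$, reduce $G-w$ to either a forest or a unicyclic graph, and invoke the known extremal bounds, finishing with Fibonacci arithmetic. A preliminary observation is that since $G$ and $B_{1}(p,q)$ are both bicyclic with the same cyclomatic number, every cycle of $G$ lies inside the induced subgraph $B_{1}(p,q)$, and $G$ is obtained from $B_{1}(p,q)$ by hanging forests on its vertices. A direct recursive computation using Lemma \ref{art21}(iii) on the pendant path of $B_{1}^{2}(3,3,n-6)$, with bases $PS(B_{1}(3,3))=20$ and $PS(B_{1}^{2}(3,3,1))=28$, gives $PS(B_{1}^{2}(3,3,n-6))=8F(n-4)+12F(n-5)$.

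In Case~1, $w$ is the degree-$4$ shared vertex of $B_{1}(p,q)$, so both cycles of $G$ are destroyed and $G-w$ is a forest on $n-1$ vertices (generally disconnected). I would use the monotonicity $PS(H+uv)\geq PS(H)$ whenever $uv$ joins two components of $H$, which is a one-line consequence of Lemma \ref{art21}(ii) since $uv$ lies in no cycle; iterating yields a spanning tree $T$ with $PS(G-w)\leq PS(T)\leq PS(P_{n-1})=F(n)$ by Lemma \ref{newwul2}. Repeated application of Lemma \ref{art210} writes $F(n)=5F(n-4)+3F(n-5)$, which is strictly less than $8F(n-4)+12F(n-5)$.

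In Case~2, $w$ is a non-shared cycle vertex of $B_{1}(p,q)$, so exactly one cycle of $G$ survives in $G-w$, which is a unicyclic component together with (possibly) a few tree pieces split off from $w$. The same edge-addition monotonicity glues these pieces back into a connected unicyclic graph $U'$ on $n-1\geq 5$ vertices with $PS(U')\geq PS(G-w)$. Lemma \ref{newwl} then bounds $PS(U')\leq 6F(n-3)+2F(n-4)$, and Lemma \ref{art210} rewrites this as $8F(n-4)+6F(n-5)$, which is strictly less than $8F(n-4)+12F(n-5)$, as required.

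The only step that needs care is the monotonicity of $PS$ under adding an edge between two components of a graph; this lets me ignore the possible disconnectedness of $G-w$ in both cases and is a direct corollary of Lemma \ref{art21}(ii) (the extra cycle-sum in the expansion vanishes, leaving $PS(H+uv)=PS(H)+PS(H-u-v)\geq PS(H)$). Everything else reduces to routine Fibonacci arithmetic via Lemma \ref{art210}.
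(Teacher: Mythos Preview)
Your argument is correct and follows essentially the same route as the paper: reduce $G-w$ to a forest or a (possibly disconnected) unicyclic graph, use the edge-addition monotonicity together with Lemmas~\ref{newwul2} and~\ref{newwl} to bound $PS(G-w)$ by $PS(P_{n-1})$ or $PS(D(3,n-4))$, and then compare with $PS(B_{1}^{2}(3,3,n-6))$. The only differences are organizational: you split cases according to whether $w$ is the shared degree-$4$ vertex or another cycle vertex, whereas the paper splits according to the resulting structure of $G-w$ (forest, connected unicyclic, or unicyclic plus forest); and you finish by computing $PS(B_{1}^{2}(3,3,n-6))=8F(n-4)+12F(n-5)$ explicitly and comparing Fibonacci coefficients, whereas the paper instead notes the single graph inequality $PS(D(3,n-4))<PS(B_{1}^{2}(3,3,n-6))$ via Lemma~\ref{art21} and pushes everything through that. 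Both devices are equivalent and equally short.
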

\begin{proof}
By Lemma \ref{art21}, we have
\begin{eqnarray}\label{equ31}
PS(B_{1}^{2}(3,3,n-6))>PS(D(3,n-4)).
\end{eqnarray}

According to the structure of $G-w$, we consider three cases as follows.

Assume that $G-w$ is a unicyclic graph with $n-1$~vertices. By Lemma \ref{newwl} and (\ref{equ31}), we have $PS(B_{1}^{2}(3,3,n-6))>PS(G-w)$.

Suppose that $G-w$ is the disjoint union of forest $H$ with  $n-m-1$ vertices and unicyclic graph $U_{m}$, where $3\leq m\leq n-2$. By Lemmas \ref{art21} and \ref{newwul2}, we have
$PS(G-w)=PS(U_{m}\cup H)\leq PS(U_{m})PS(P_{n-m-1})<PS(M)$, where $M$ be a graph obtained by joining  a vertex of $U_{m}$ and a pendant vertex of $P_{n-m-1}$.
By Lemma \ref{newwl} and (\ref{equ31}), we obtain that  $PS(G-w)<PS(B_{1}^{2}(3,3,n-6))$.

Assume that $G-w$ is  a forest $H$ with $n-1$ vertices. By Lemmas \ref{art21}, \ref{newwul2} and (\ref{equ31}), $PS(G-w)=PS(H)\leq~PS(P_{n-1})<PS(D(3,n-4))<PS(B_{1}^{2}(3,3,n-6))$. \end{proof}

\begin{lemma}\label{art31}
Let $G\cong B_{1}(p,q)$ be a graph with $n$ vertices.\\
$(i)$ If $n=6$, then $PS(G)=PS(B_{1}(3,4))>B_{1}^{2}(3,3,1)$.\\
 $(ii)$ If $n\geq 7$, then $PS(G)<PS(B_{1}^{2}(3,3,n-5))$.
\end{lemma}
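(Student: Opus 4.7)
The plan is to apply Lemma \ref{art21}(iii) at the common vertex $w$ of the two cycles in $B_1(p,q)$ to obtain a closed-form Fibonacci expression for $PS(B_1(p,q))$, do the same at the corresponding vertex of $B_1^2(3,3,n-5)$, and then compare.

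For part $(i)$, when $n=6$ the constraints $p+q-1=n$ and $p,q\geq 3$ force $\{p,q\}=\{3,4\}$, so $B_1(p,q)\cong B_1(3,4)$ up to isomorphism. A direct application of Lemma \ref{art21}(iii) at $w$ expresses both $PS(B_1(3,4))$ and $PS(B_1^2(3,3,1))$ as sums of products of $PS$-values of small paths, yielding concrete integers that establish the strict inequality.

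For part $(ii)$, with $n\geq 7$, the decomposition at $w$ (noting $B_1(p,q)-w\cong P_{p-1}\cup P_{q-1}$, four neighbors of $w$, and two cycles through $w$) gives
\begin{equation*}
PS(B_1(p,q)) = F(p)F(q) + 2F(p-1)F(q) + 2F(p)F(q-1) + 2F(p) + 2F(q),
\end{equation*}
under the convention $PS(P_k)=F(k+1)$, with $F$ the standard Fibonacci sequence governed by Lemma \ref{art210}. A parallel decomposition at the common triangle-vertex of $B_1^2(3,3,n-5)$ (the graph obtained from $B_1(3,3)$ by attaching a pendant path $P_{n-5}$ at a degree-$2$ vertex) yields $PS(B_1^2(3,3,n-5)) = 8F(n-3) + 12F(n-4)$, matching the extremal value in Theorem \ref{art1}. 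I then show that the maximum of $PS(B_1(p,q))$ over $p+q=n+1$ and $3\leq p\leq q$ is attained at $(3,n-2)$ by a smoothing argument: setting $\Delta(p,q):=PS(B_1(p,q))-PS(B_1(p+1,q-1))$ and applying Fibonacci identities derived from Lemma \ref{art210} gives
\begin{equation*}
\Delta(p,q) = 2\bigl[F(q-2) - F(p-1)\bigr] + (-1)^{p}F(q-p-1),
\end{equation*}
which is positive whenever $q\geq p+2$. Iterating yields $PS(B_1(p,q))\leq PS(B_1(3,n-2)) = 6F(n-2)+4F(n-3)+4$, and the final comparison reduces to $PS(B_1^2(3,3,n-5)) - PS(B_1(3,n-2)) = 4F(n-4) - 2F(n-5) - 4$, which equals $2$ at $n=7$ and grows with $n$.

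The main obstacle will be verifying $\Delta(p,q)>0$ uniformly: the alternating sign $(-1)^p$ on the correction term means I must control the ``path-balance'' term $2[F(q-2)-F(p-1)]$, using the inequality $F(q-2)-F(p-1)\geq F(q-p-1)$ valid under $p\leq q-2$, plus a tiny boundary check at $q=p+1$ handled by direct substitution. Once this dominance is established, the remainder of the proof is a routine Fibonacci computation.
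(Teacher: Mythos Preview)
Your proposal is correct, and the computations check: the vertex-decomposition at $w$ gives $PS(B_1(p,q))=F(p)F(q)+2F(p-1)F(q)+2F(p)F(q-1)+2F(p)+2F(q)$, the analogous expansion at the cut-vertex of $B_1^2(3,3,n-5)$ gives $8F(n-3)+12F(n-4)$, the smoothing identity $\Delta(p,q)=2[F(q-2)-F(p-1)]+(-1)^pF(q-p-1)$ follows from d'Ocagne's identity (a consequence of Lemma~\ref{art210}), and your dominance bound $F(q-2)-F(p-1)\geq F(p-2)F(q-p-1)\geq F(q-p-1)$ for $p\geq3$, $q\geq p+2$ kills the alternating term. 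The endpoint comparison $4F(n-4)-2F(n-5)-4>0$ for $n\geq7$ is immediate.

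Your route differs from the paper's. The paper does not reduce to the extremal pair $(3,n-2)$; instead it writes the difference $PS(B_1^2(3,3,n-5))-PS(B_1(p,q))$ directly as a single Fibonacci expression in $p$ and $q$ (with $p+q=n+1$) and argues positivity for every admissible $(p,q)$ in one stroke. That approach is shorter on paper but the final inequality chain is dense and must hold uniformly in two parameters. Your two-step argument---smoothing to $(3,n-2)$, then a one-parameter endpoint comparison---is longer but more modular: each step involves only a single free variable, and the d'Ocagne-based smoothing formula makes the monotonicity transparent. A small remark: the ``boundary check at $q=p+1$'' is vacuous, since then $\Delta(p,p+1)=0$ and $B_1(p,p+1)\cong B_1(p+1,p)$; the chain toward $(3,n-2)$ only ever invokes $\Delta(p',q')$ with $q'\geq p'+2$.
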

\begin{proof}
Direct calculation yields $30=PS(B_{1}(3,4))>PS(B_{1}^{2}(3,3,1)=28$.
By Lemmas \ref{art21} and \ref{art210}, we have
$PS(B_{1}^{2}(3,3,n-5))-PS(B_{1}(p,q))
=F(n-1-q)[5F(q-1)+11F(q-2)-2]+F(n-2-q)[4F(q-2)+10F(q-3)]-2F(q)>F(q-1)+11F(q-2)+10F(q-3)-2>0$.
\end{proof}

\begin{lemma}\label{art32}
Let $G\cong B_{1}^{2}(p,q,r)$ be a graph with $n$ vertices. Then $PS(G)\leq PS(B_{1}^{2}(3,3,n-5))$, where the equality holds if and only if $G\cong B_{1}^{2}(3,3,n-5)$.
\end{lemma}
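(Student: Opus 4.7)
My plan is to mirror the direct computational approach of Lemma~\ref{art31}: reduce $PS(B_1^2(p,q,r))$ to an explicit expression in Fibonacci numbers and the cycle lengths $p,q$, do the same for the target $B_1^2(3,3,n-5)$, and then show the difference is nonnegative via Lemma~\ref{art210}.

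First I would collapse the pendant path. Viewing $B_1^2(p,q,r)$ as the figure-eight $B_1(p,q)$ with a path of $r$ extra vertices hanging off a distinguished vertex $v$ (the attachment point fixed by the definition of $B_1^2$), repeated application of Lemma~\ref{art21}(iii) at the pendant endpoint (which lies on no cycle, so the cycle-sum vanishes) yields the Fibonacci-type closed form
\begin{equation*}
PS(B_1^2(p,q,r)) = F(r+1)\,PS(B_1(p,q)) + F(r)\,PS(B_1(p,q)-v),
\end{equation*}
and in particular $PS(B_1^2(3,3,n-5)) = F(n-4)\,PS(B_1(3,3)) + F(n-5)\,PS(B_1(3,3)-v)$, with both $PS(B_1(3,3))$ and $PS(B_1(3,3)-v)$ explicit small integers. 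Next, I would apply Lemma~\ref{art21}(ii) to an edge of the cycle $C_p$ inside $B_1(p,q)$ (and analogously inside $B_1(p,q)-v$): edge deletion turns $C_p$ into a path, vertex deletion removes its two endpoints, and the $+2\,PS(G-V(C_p))$ term picks up twice the $PS$ of the surviving cycle or its vertex-deletion. The outcome is explicit formulas for $PS(B_1(p,q))$ and $PS(B_1(p,q)-v)$ as polynomial combinations of $F(p),F(p-1),F(p-2),F(q),F(q-1),F(q-2)$, exactly in the style already used in Lemma~\ref{art31}.

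Combining these, I would form
\begin{equation*}
\Delta(p,q) := PS(B_1^2(3,3,n-5)) - PS(B_1^2(p,q,r)) \qquad \text{with } r = n-p-q+1,
\end{equation*}
and apply Lemma~\ref{art210} to split each Fibonacci number of index depending on $n$ into a bilinear product aligned with the decomposition $n = (p+q-1)+r$. The aim is to rewrite $\Delta(p,q)$ as a sum of Fibonacci-number products with manifestly nonnegative coefficients, vanishing precisely when $p=q=3$. The main obstacle is this final sign analysis: the factor $F(r)$ grows rapidly as $p+q$ shrinks, while $PS(B_1(p,q))$ grows as $p+q$ grows, so the inequality is not evident termwise. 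I expect the argument will split (by the $p\leftrightarrow q$ symmetry built into $B_1(p,q)$) into the boundary case $p=3,\,q\geq 4$ and the interior case $4\leq p\leq q$, and that the algebraic witnesses of $(p,q)\neq(3,3)$ will be factors of the form $F(p-3)$ and $F(q-3)$ extracted by iterated use of Lemma~\ref{art210}.
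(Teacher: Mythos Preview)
Your plan is viable, and its first step---collapsing the pendant path so that $PS(B_1^2(p,q,r))=F(r+1)\,PS(B_1(p,q))+F(r)\,PS(B_1(p,q)-v)$---is exactly how the paper begins. The divergence is in what happens next. You propose to fully expand $PS(B_1(p,q))$ and $PS(B_1(p,q)-v)$ into Fibonacci polynomials and then wrestle the difference $\Delta(p,q)$ into a nonnegative form by hand, anticipating (correctly) a delicate sign analysis and a case split. The paper avoids almost all of this by a single structural trick: it collapses the pendant path of the \emph{target} $B_1^2(3,3,n-5)$ only down to level $r$ as well, writing
\[
PS(B_1^2(3,3,n-5))=PS(P_r)\,PS(B_1^2(3,3,n-r-5))+PS(P_{r-1})\,PS(B_1^2(3,3,n-r-6)).
\]
Because $n-r-5=p+q-6$, the two bracketed comparisons that result pit graphs of \emph{equal order} against each other, and those comparisons are precisely Lemma~\ref{art31}(ii) (for the first bracket) and Lemma~\ref{art35} (for the second). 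For $p+q\ge 8$ both brackets are then positive with no computation at all; the only residual case is $p+q=7$, handled by three explicit evaluations. So what the paper's approach buys is recycling: Lemmas~\ref{art31} and~\ref{art35} already encapsulate the Fibonacci algebra you are planning to redo, and aligning the two path-collapses at the same depth $r$ lets you invoke them directly. Your route would reach the same conclusion but reproves those lemmas inside this one.
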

\begin{proof}
Let $G\neq B_{1}^{2}(3,3,n-5)$. By Lemma \ref{art21},
we get that
\begin{eqnarray}\label{equ30}
PS(B_{1}^{2}(3,3,n-5))-PS(G)
&=&PS(P_{r})[PS(B_{1}^{2}(3,3,n-r-5))-PS(B_{1}(p,q))]\\
&&+PS(P_{r-1})[PS(B_{1}^{2}(3,3,n-r-6))-PS(B_{1}(p,q)-v)].\nonumber
\end{eqnarray}

Suppose that $p+q=7$ in $G$. It is easy to know that $B_{1}(4,3)-v$ is a unicyclic graph with $5$ vertices. By Lemma \ref{newwl},~we get that $PS(B_{1}(4,3)-v))\leq~PS(D(3,2))=14$, where the quality holds if and only if $B_{1}(4,3)-v\cong~D(3,2)$. By Lemma \ref{art31}$(i)$ and (\ref{equ30}), we have
\begin{eqnarray*}
PS(B_{1}^{2}(3,3,n-5))-PS(G)
&=&PS(P_{n-6})[PS(B_{1}^{2}(3,3,1))-PS(B_{1}(3,4))]\\
&&+PS(P_{n-7})[PS(B_{1}(3,3))-PS(B_{1}(3,4)-v)]\\
&\geq&-2F(n-5)+6F(n-6)>0.
\end{eqnarray*}

Assume that $p+q\geq8$ in $G$. By Lemmas \ref{art21}, \ref{art35}, \ref{art31}$(ii)$ and (\ref{equ30}), we obtain that
\begin{eqnarray*}
PS(B_{1}^{2}(3,3,n-5))-PS(G)
&=&PS(P_{r})[PS(B_{1}^{2}(3,3,n-r-5))-PS(B_{1}(p,q))]\\
&&+PS(P_{r-1})[PS(B_{1}^{2}(3,3,n-r-6))-PS(B_{1}(p,q)-v)]>0.
\end{eqnarray*}
\end{proof}

\begin{lemma}\label{art33}
Let $B_{1}^{1}(p,q,r)$  and  $B_{1}^{2}(p',q',r')$ be two graphs with $n$ vertices, see Figure \ref{fig31}. If $p=p'$, $q=q'$ and $r=r'$,  then $PS(B_{1}^{1}(p,q,r))<PS(B_{1}^{2}(p',q',r'))$.
\end{lemma}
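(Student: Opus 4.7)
The plan is to apply Lemma \ref{art21}(ii) to the bridge that joins the pendant path $P_{r}$ to the bicyclic core $B_{1}(p,q)$ in each of the two graphs, and thereby reduce the comparison to a comparison of $PS(B_{1}(p,q)-v)$ with $PS(B_{1}(p,q)-u)$, where $v$ is the cut vertex (the attaching point in $B_{1}^{1}$) and $u$ is a degree-$2$ vertex adjacent to $v$ on one of the two cycles (the attaching point in $B_{1}^{2}$, per Figure \ref{fig31}). Since the attaching edge lies in no cycle, Lemma \ref{art21}(ii) immediately yields, for $i\in\{1,2\}$,
\begin{eqnarray*}
PS(B_{1}^{i}(p,q,r)) = PS(B_{1}(p,q))\cdot PS(P_{r}) + PS(B_{1}(p,q)-u_{i})\cdot PS(P_{r-1})
\end{eqnarray*}
with $u_{1}=v$ and $u_{2}=u$, so that
\begin{eqnarray*}
PS(B_{1}^{2}(p,q,r)) - PS(B_{1}^{1}(p,q,r)) = PS(P_{r-1})\bigl[PS(B_{1}(p,q)-u) - PS(B_{1}(p,q)-v)\bigr].
\end{eqnarray*}
It remains to show the bracket is strictly positive.

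To evaluate the two terms I would observe first that removing the cut vertex disconnects the two cycles into paths, giving $B_{1}(p,q)-v \cong P_{p-1}\cup P_{q-1}$ and hence, by Lemma \ref{art21}(i), $PS(B_{1}(p,q)-v)=F(p)F(q)$. On the other hand, removing $u$ preserves the surviving cycle $C_{q}$ while turning $C_{p}$ into a path of $p-1$ vertices with $v$ as an endpoint, so $B_{1}(p,q)-u$ is precisely the tadpole $D(q,p-2)$: a cycle $C_{q}$ with a pendant path of $p-2$ vertices hanging from $v$ via an edge. A second application of Lemma \ref{art21}(ii) to this bridge, together with the standard Sachs evaluation $PS(C_{q})=F(q-1)+F(q+1)+2$, yields
\begin{eqnarray*}
PS(D(q,p-2)) = \bigl(F(q-1)+F(q+1)+2\bigr)F(p-1)+F(q)F(p-2).
\end{eqnarray*}
Using $F(p)=F(p-1)+F(p-2)$ and $F(q+1)-F(q)=F(q-1)$, the bracket then telescopes to
\begin{eqnarray*}
PS(B_{1}(p,q)-u)-PS(B_{1}(p,q)-v) = 2F(p-1)\bigl(F(q-1)+1\bigr)>0,
\end{eqnarray*}
which completes the argument.

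The main obstacle I foresee is pinning down the exact attachment vertex $u$ in $B_{1}^{2}(p,q,r)$ from the figure. The computation above is cleanest when $u$ is adjacent to $v$, because then $B_{1}(p,q)-u$ is exactly the tadpole $D(q,p-2)$. Should Figure \ref{fig31} place $u$ at a more interior position of $C_{p}$, then $B_{1}(p,q)-u$ would instead be $C_{q}$ with two pendant arms dangling at $v$; in that case a single application of the Type \textbf{I} transformation (Theorem \ref{art23}) that straightens the two arms into a single pendant path would strictly increase $PS$, reducing again to the tadpole case and preserving the strict inequality.
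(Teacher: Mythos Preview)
Your reduction via Lemma~\ref{art21}(ii) to the bracket $PS(B_{1}(p,q)-u)-PS(B_{1}(p,q)-v)$ is exactly what the paper does, and your explicit Fibonacci computation showing the bracket equals $2F(p-1)(F(q-1)+1)>0$ is correct. The paper, however, does not compute at all: it simply observes that $B_{1}(p,q)$ minus the cut vertex (namely $P_{p-1}\cup P_{q-1}$) is a spanning subgraph of $B_{1}(p,q)$ minus the degree-$2$ vertex (a unicyclic graph on the same vertex set), and then invokes the edge-monotonicity of $PS$ coming from Lemma~\ref{art21}(ii). That argument is shorter and, unlike yours, is insensitive to \emph{which} degree-$2$ vertex of $C_{p}$ the figure singles out---so your contingency plan involving Theorem~\ref{art23} is unnecessary once you see the subgraph observation. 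One small caution: your labels $u$ and $v$ are swapped relative to the paper's Figure~\ref{fig31} (there $u$ denotes the cut vertex and $v$ the degree-$2$ vertex), so if you import the paper's notation later you should realign.
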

\begin{proof}
By Lemma \ref{art21}, we get that
$PS(B_{1}^{2}(p,q,r))-PS(B_{1}^{1}(p,q,r))
=[PS(B_{1}(p,q)-v))-PS(B_{1}(p,q)-u))]PS(P_{r-1})$.
Checking $B_{1}(p,q)-v$ and $B_{1}(p,q)-u$, we know that $B_{1}(p,q)-u$ is a subgraph of $B_{1}(p,q)-v$. By Lemma \ref{art21}$(ii)$, $PS(B_{1}(p,q,r)-v)>PS(B_{1}(p,q,r)-u)$. So, $PS(B_{1}^{2}(p,q,r))-PS(B_{1}^{1}(p,q,r))>0$.
\end{proof}
\begin{lemma}\label{art34}
Let $B_{1}^{2}(p,q,r)$  and  $B_{1}^{3}(p',q',r',s')$ be two graphs with $n$ vertices, see Figure \ref{fig31}. If $p=p'$, $q=q'$ and $r=r'+s'$,  then $PS(B_{1}^{3}(p',q',r',s'))<PS(B_{1}^{2}(p,q,r))$.
\end{lemma}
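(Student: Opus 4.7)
The plan is to reduce $B_{1}^{3}(p',q',r',s')$ to $B_{1}^{2}(p,q,r)$ in two conceptual steps: first, consolidate the two pendant paths of $B_{1}^{3}$ into a single pendant path of length $r=r'+s'$ using the Type II transformation of Definition \ref{art24} and Theorem \ref{art25}; second, relocate the consolidated pendant path to the shared vertex of the bowtie $B_{1}(p,q)$ via Lemma \ref{art33}. The intermediate graph produced by the first step should be isomorphic to $B_{1}^{1}(p,q,r)$ (or in degenerate cases $B_{1}^{2}(p,q,r)$ itself), so Lemma \ref{art33} closes the chain.

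In more detail, in $B_{1}^{3}(p',q',r',s')$ the pendant paths $P_{r'}$ and $P_{s'}$ are attached at distinct degree-$2$ vertices $x$ and $y$ of $B_{1}(p,q)$. When $x$ and $y$ lie on a common cycle of the bowtie, the arc of that cycle between them has interior vertices of degree $2$, so $B_{1}^{3}(p',q',r',s')$ realizes the configuration $G_{1}$ of Definition \ref{art24} with $x=u_{i}$, $y=u_{p}$, and pendant paths $P_{s'},P_{r'}$ in the roles of $P_{s},P_{t}$. Theorem \ref{art25} yields $PS(B_{1}^{3}(p',q',r',s'))<PS(G^{*})$ for some $G^{*}\in\{G_{2},G_{3}\}$; both candidates for $G^{*}$ are isomorphic to $B_{1}^{1}(p,q,r)$, since the two pendant paths merge into a single one of length $r$ attached at $x$ or at $y$. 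Lemma \ref{art33} then gives $PS(G^{*})<PS(B_{1}^{2}(p,q,r))$, completing this case.

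The main obstacle is the case in which $x$ and $y$ lie on different cycles of the bowtie, so every $xy$-path in $B_{1}(p,q)$ passes through the shared vertex of degree $4$, and Theorem \ref{art25} cannot be invoked directly with both attachments as interior vertices of a single degree-$2$ arc. My intended remedy is to apply Lemma \ref{art21}(ii) to a pendant edge of $B_{1}^{3}$ and to a pendant edge of $B_{1}^{2}$ simultaneously, expanding both permanental sums into combinations of $PS$ values of $B_{1}(p,q)$, of its vertex-deleted subgraphs $B_{1}(p,q)-v$, and of pure path components. Using Lemma \ref{art210} to manipulate the resulting Fibonacci-like combinations, the comparison reduces to a positivity check analogous to the one in the proof of Lemma \ref{art32}. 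The same expansion also covers the small-cycle edge cases in which the arc between $x$ and $y$ has fewer than two interior vertices (violating the $p\geq 2$ hypothesis of Definition \ref{art24}). Throughout, positivity ultimately reflects the same subgraph-containment principle underlying Lemma \ref{art33}: concentrating the pendant tree at the shared vertex of the bowtie yields a strictly larger permanental sum.
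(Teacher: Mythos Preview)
Your reading of $B_{1}^{3}(p',q',r',s')$ does not match the paper's. One of its two pendant paths is attached at the \emph{shared} vertex $u$ of the bowtie---the unique vertex of degree $4$ in $B_{1}(p,q)$---not at a second degree-$2$ vertex. This can be read off the paper's own computation: the difference $PS(B_{1}^{2})-PS(B_{1}^{3})$ reduces to a multiple of $PS(B_{1}^{2}(p,q,s'-1))-PS(B_{1}^{2}(p,q,s')-u)$, and the subsequent bound $PS(B_{1}^{2}(p,q,s')-u)\leq PS(P_{q-1}\cup P_{s'+p-1})$ only makes sense when deleting $u$ separates the two cycles, i.e.\ when $u$ is the common vertex.

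This knocks out your primary plan. In Definition~\ref{art24} both attachment points $u_{i}$ and $u_{p}$ must be interior vertices of a degree-$2$ path in the base graph; the shared vertex has degree $4$ in $B_{1}(p,q)$ and can never play that role, so Theorem~\ref{art25} is simply not available for merging the two pendants of $B_{1}^{3}$. Your ``same-cycle'' case does not arise, and every instance is of the type you flagged as the obstacle. The fallback you sketch---expand both sides with Lemma~\ref{art21}(ii) at a pendant edge and compare---is in fact the whole proof, and the paper carries it out in two lines: delete the edge from $u$ to the first vertex of $P_{r'}$ in $B_{1}^{3}$, delete the edge $w_{s'}w_{s'+1}$ of the pendant path in $B_{1}^{2}$, observe that the leading terms cancel, and finish with
\[
PS(B_{1}^{2}(p,q,s'-1))>PS(D(p,s'-1)\cup P_{q-1})>PS(P_{s'+p-1}\cup P_{q-1})\geq PS(B_{1}^{2}(p,q,s')-u).
\]
No Fibonacci identities or further case splits are needed, so your outline of the fallback substantially overestimates the work involved.
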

\begin{proof}
Suppose that $P_{r}=w_{1}w_{2}\ldots w_{s'}w_{s'+1}\ldots w_{r'+s'}$ is an induced path of $B_{1}^{2}(p,q,r)$. By Lemma \ref{art21}, we have
$PS(B_{1}^{2}(p,q,r))-PS(B_{1}^{3}(p',q',r',s'))=[PS(B_{1}^{2}(p,q,s'-1))-PS(B_{1}^{2}(p,q,s')-u)]PS(P_{r-1})$.
By Lemma \ref{newwul2} and the structure of $B_{1}^{2}(p,q,s')-u$, we have $PS(B_{1}^{2}(p,q,s')-u)\leq PS(P_{q-1}\cup P_{s'+p-1})$. By Lemma \ref{art21},
$PS(B_{1}^{2}(p,q,s'-1))>PS(D(p,s'-1)\cup P_{q-1})>PS(P_{p+s'-1}\cup P_{q-1})$. Thus, $PS(B_{1}^{2}(p,q,r))-PS(B_{1}^{3}(p',q',r',s'))>0$.
\end{proof}

\begin{lemma}\label{art36}
Let $B_{1}^{2}(p,q,r)$  and  $B_{1}^{4}(p',q',r',s')$ be two graphs with $n$ vertices, see Figure \ref{fig31}. If $p=p'$, $q=q'$ and $r=r'+s'$,  then $PS(B_{1}^{4}(p',q',r',s'))<PS(B_{1}^{2}(p,q,r))$.
\end{lemma}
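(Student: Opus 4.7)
The plan is to follow the decomposition used in Lemma~\ref{art34}. Let $u$ denote the vertex of $B_{1}(p,q)$ at which the pendant path $P_{r'}$ is attached in $B_{1}^{4}(p',q',r',s')$; this is a cycle vertex distinct from the shared vertex (where $P_{s'}$ is attached), and it is here that $B_{1}^{4}$ differs structurally from $B_{1}^{2}(p,q,r)$. I would apply Lemma~\ref{art21}(ii) simultaneously to the edge $w_{s'}w_{s'+1}$ of the induced path $P_{r}=w_{1}w_{2}\ldots w_{r}$ in $B_{1}^{2}(p,q,r)$ and to the root edge of $P_{r'}$ in $B_{1}^{4}(p',q',r',s')$. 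Both edges lie on pendant paths, so the cycle-sum term in Lemma~\ref{art21}(ii) vanishes in each decomposition. Moreover, using $r=r'+s'$, the two edge-deletion terms coincide (both equal $PS(B_{1}^{2}(p,q,s'))\cdot PS(P_{r'})$) and hence cancel in the difference.

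After factoring out $PS(P_{r'-1})$ via Lemma~\ref{art21}(i), the computation reduces to
\begin{equation*}
PS(B_{1}^{2}(p,q,r))-PS(B_{1}^{4}(p',q',r',s'))=\bigl[PS(B_{1}^{2}(p,q,s'-1))-PS(B_{1}^{2}(p,q,s')-u)\bigr]\,PS(P_{r'-1}),
\end{equation*}
so the task reduces to showing $PS(B_{1}^{2}(p,q,s'-1))>PS(B_{1}^{2}(p,q,s')-u)$. Since $u$ is a non-shared cycle vertex, the graph $B_{1}^{2}(p,q,s')-u$ is a connected unicyclic graph consisting of the cycle that did not contain $u$ together with the broken cycle (a path) and the pendant $P_{s'}$, all joined at the shared vertex. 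Applying Lemma~\ref{art21}(iii) at the shared vertex and then Lemma~\ref{newwul2} to the resulting tree pieces yields a bound of the shape $PS(B_{1}^{2}(p,q,s')-u)\leq PS(P_{q-1}\cup P_{p+s'-1})$, i.e.\ a product of two Fibonacci numbers. On the other side, $B_{1}^{2}(p,q,s'-1)$ is connected and bicyclic, and by a direct application of Lemma~\ref{art21}(ii) one has $PS(B_{1}^{2}(p,q,s'-1))>PS(D(p,s'-1)\cup P_{q-1})>PS(P_{p+s'-1}\cup P_{q-1})$, which closes the strict inequality.

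The main obstacle is the precise identification of $B_{1}^{2}(p,q,s')-u$, which depends on which of the two cycles of $B_{1}(p,q)$ the vertex $u$ lies on and on its distance from the shared vertex. Each such subcase produces a slightly different unicyclic graph, so the upper bound by a product of two path permanental sums may need to be derived through intermediate applications of Lemma~\ref{art21}(ii)-(iii) and the Fibonacci identity in Lemma~\ref{art210}, mirroring the bookkeeping of Lemma~\ref{art34}. Aside from this case analysis and the verification of the small or boundary cases $s'=1$ and $r'=1$ by direct computation, the argument is a straightforward parallel of Lemma~\ref{art34}.
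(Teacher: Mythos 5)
Your reduction collapses at its key step. You want to conclude via
\begin{equation*}
PS(B_{1}^{2}(p,q,s')-u)\leq PS(P_{q-1}\cup P_{p+s'-1})<PS(B_{1}^{2}(p,q,s'-1)),
\end{equation*}
importing the first inequality from the proof of Lemma~\ref{art34}. But in Lemma~\ref{art34} that bound is legitimate precisely because the deleted vertex there is the \emph{shared} (cut) vertex of $B_{1}(p,q)$, so its removal destroys both cycles and leaves a forest, to which Lemma~\ref{newwul2} applies. In your setting $u$ is, by your own description, a degree-$2$ cycle vertex distinct from the shared vertex, so $B_{1}^{2}(p,q,s')-u$ is a connected \emph{unicyclic} graph: one cycle survives intact. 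A graph containing a cycle has strictly more Sachs subgraphs (and an extra factor $2$ for the cycle itself) than any spanning forest, so its permanental sum cannot be bounded by that of a forest on the same number of vertices --- already $PS(C_{3})=6>3=PS(P_{3})$, and in the smallest relevant instance $PS(B_{1}^{2}(3,3,1)-u)$ is around $13$ while $PS(P_{2}\cup P_{3})=6$. So the inequality you rely on is false, and the comparison of the bicyclic graph $B_{1}^{2}(p,q,s'-1)$ with the unicyclic graph $B_{1}^{2}(p,q,s')-u$ (two graphs of equal order, one with two cycles, one with one) would need a genuinely new argument. There is also a smaller inconsistency: you assert that the two edge-deletion terms cancel because both equal $PS(B_{1}^{2}(p,q,s'))PS(P_{r'})$, yet you place the attachment of $P_{s'}$ at the shared vertex, in which case deleting the root edge of $P_{r'}$ leaves $B_{1}^{1}(p,q,s')\cup P_{r'}$, not $B_{1}^{2}(p,q,s')\cup P_{r'}$; by Lemma~\ref{art33} these differ, so the terms do not cancel (though that residue at least has a favorable sign).

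For contrast, the paper avoids all of this by decomposing $B_{1}^{4}(p',q',r',s')$ at the root edge of the \emph{second} path $P_{s'}$, obtaining $PS(B_{1}^{4}(p',q',r',s'))=PS(B_{1}^{2}(p,q,r'))PS(P_{s'})+PS(B_{1}^{2}(p,q,r')-w)PS(P_{s'-1})$, then bounding the first factor by Lemma~\ref{art32} and the second by Lemma~\ref{art35} (which was set up exactly to handle the deletion of an arbitrary vertex $w$ of the induced $B_{1}(p,q)$, whatever cycles survive), and reassembling the two bounds into $PS(B_{1}^{2}(3,3,n-5))$. If you want to salvage your parallel-decomposition plan, you would need a substitute for the forest bound that is valid for the unicyclic remainder, e.g.\ an appeal to Lemma~\ref{newwl} together with an explicit comparison against $PS(B_{1}^{2}(p,q,s'-1))$; as written, the proof does not go through.
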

\begin{proof}
 By Lemmas \ref{art21}, \ref{art35} and \ref{art32}, we obtain that $PS(B_{1}^{4}(p',q',r',s'))
=PS(B_{1}^{2}(p,q,r'))PS(P_{s'})+PS(B_{1}^{2}(p,q,r')-w)PS(P_{s'-1})
<PS(B_{1}^{2}(3,3,n-s'-5))PS(P_{s'})+PS(B_{1}^{2}(3,3,n-s'-6))PS(P_{s'-1})=PS(B_{1}^{2}(3,3,n-5))$.
\end{proof}
\begin{lemma}\label{art37}
Let $B_{1}^{2}(p,q,r)$  and  $B_{1}^{5}(p',q',r',s',t')$ be two graphs with $n$ vertices, see Figure \ref{fig31}. If $p=p'$, $q=q'$ and $r=r'+s'+t'$,  then $PS(B_{1}^{5}(p',q',r',s',t'))<PS(B_{1}^{2}(p,q,r))$.
\end{lemma}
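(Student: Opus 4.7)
The plan is to perform a two-step reduction: first reduce $B_{1}^{5}(p',q',r',s',t')$ to a $B_{1}^{4}$-type graph with strictly larger permanental sum by invoking the Type II transformation of Theorem \ref{art25}, and then apply Lemma \ref{art36} to finish the bound.

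Inspecting Figure \ref{fig31}, I expect that $B_{1}^{5}(p',q',r',s',t')$ contains an internal path of degree-$2$ vertices on which two of its three pendant-path branches are supported. I would identify these two branches with the pendant paths $P_{s}$ and $P_{t}$ of Definition \ref{art24} and the supporting internal path with $P_{p+2}$; Theorem \ref{art25} then produces a graph $G_{*}$ with $PS(B_{1}^{5}(p',q',r',s',t')) < PS(G_{*})$ in which one of the two branches has been consolidated onto the end of the other. By construction, $G_{*}$ is a $B_{1}^{4}$-type graph $B_{1}^{4}(p,q,r'',s'')$ whose four parameters satisfy $r''+s'' = r'+s'+t' = r$, since the Type II operation only redistributes vertices along paths and preserves the total length.

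Next I would apply Lemma \ref{art36} to $G_{*}$, which yields $PS(G_{*}) < PS(B_{1}^{2}(p,q,r''+s'')) = PS(B_{1}^{2}(p,q,r))$. Chaining the two strict inequalities gives
\begin{equation*}
PS(B_{1}^{5}(p',q',r',s',t')) \;<\; PS(G_{*}) \;<\; PS(B_{1}^{2}(p,q,r)),
\end{equation*}
which is exactly the desired conclusion.

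The main obstacle will be verifying that two of the three branches of $B_{1}^{5}$ genuinely satisfy the structural hypothesis of Definition \ref{art24}, i.e.\ that they hang at internal degree-$2$ vertices of a common path in the way the Type II transformation requires. If this is not literally true for the labelling chosen, a fallback is a direct decomposition via Lemma \ref{art21}(ii) along the edge incident to one branch, which writes $PS(B_{1}^{5})$ as a combination of $PS$ values on a $B_{1}^{4}$-type subgraph and its vertex-deleted counterpart; these pieces can then be bounded using Lemmas \ref{art32}, \ref{art35}, and \ref{art36}, and recombined through the Fibonacci identity of Lemma \ref{art210} to recover the stated inequality.
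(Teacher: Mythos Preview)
Your fallback is essentially what the paper does, and the paper does \emph{not} use your primary Type~II route. Concretely, the paper applies Lemma~\ref{art21} at the edge where the $P_{t'}$-branch meets the rest of the graph, obtaining
\[
PS(B_{1}^{5}(p',q',r',s',t'))=PS(B_{1}^{3}(p,q,r',s'))\,PS(P_{t'})+PS(B_{1}^{3}(p,q,r',s')-w)\,PS(P_{t'-1}),
\]
then bounds the first factor by Lemma~\ref{art34} (not Lemma~\ref{art36}) and the second by Lemma~\ref{art35}, and recombines the two terms using Lemma~\ref{art21} again (not the Fibonacci identity of Lemma~\ref{art210}) to reach $PS(B_{1}^{2}(3,3,n-5))$. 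So in your fallback you should expect the piece left after removing one branch to be of $B_{1}^{3}$-type, and the relevant comparison lemma is \ref{art34}.

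Your primary approach via Theorem~\ref{art25} is a genuinely different idea and would also work, but note a wrinkle you did not mention: Theorem~\ref{art25} only gives the disjunction $PS(G_{1})<PS(G_{2})$ \emph{or} $PS(G_{1})<PS(G_{3})$, so you cannot pick in advance which consolidated graph you land on. Both outcomes are two-pendant-path graphs on $B_{1}(p,q)$, hence of $B_{1}^{3}$- or $B_{1}^{4}$-type, and either Lemma~\ref{art34} or Lemma~\ref{art36} then finishes the job; but you must treat both cases. The paper's direct decomposition avoids this case split, which is why it is shorter.
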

\begin{proof}
Similar to the proof of Lemma \ref{art36}, by Lemmas \ref{art21}, \ref{art35} and \ref{art34}, we obtain that $PS(B_{1}^{5}(p',\\q',r',s',t'))
=PS(B_{1}^{3}(p,q,r',s'))PS(P_{t'})+PS(B_{1}^{3}(p,q,r',s')-w)PS(P_{t'-1})
<PS(B_{1}^{2}(3,3,n-t'-5))PS(P_{t'})+PS(B_{1}^{2}(3,3,n-t'-6))PS(P_{t'-1})
=PS(B_{1}^{2}(3,3,n-5))$.
\end{proof}
Combining Definitions \ref{art22} and \ref{art24}, Theorems \ref{art23} and \ref{art25}, and Lemmas \ref{art31},~\ref{art32},~\ref{art33},~\ref{art34},~\ref{art36}~and\\~\ref{art37},~the proof of Theorem \ref{art1} is straightforward.

\section{ The proof of Theorem \ref{art2}}

\begin{lemma}\label{art41}( {\cite{wu1}}) Let~$G\in\mathscr B_{n}^{2}$ be a graph with $n\geq8$ vertices. Then~$PS(G)\leq20F(n-3)$~with equality holding if and only if~$G\cong B_{2}(3,3,n-6)$.
\end{lemma}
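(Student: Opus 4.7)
The plan is to mirror the strategy used in the proof of Theorem \ref{art1}: first use the two structural transformations to reduce to a small family of canonical candidates, then compare these candidates to the proposed extremum $B_2(3,3,n-6)$ via explicit $PS$-recursions.

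First, for an arbitrary $G \in \mathscr B_n^2(p,q,r)$ containing $B_2(p,q,r)$ as an induced subgraph, repeatedly apply transformation I (Theorem \ref{art23}) to convert every hanging tree into a pendant path, and then apply transformation II (Theorem \ref{art25}) to merge any two pendant paths that share an attachment vertex. Each application strictly increases $PS$ unless the graph was already in the resulting normal form, so it suffices to maximize $PS$ over the canonical forms with $p+q+r+s = n$: namely (F0) $G = B_2(p,q,r)$ itself, (F1) $G$ obtained from $B_2(p,q,r)$ by attaching one pendant path $P_s$ at a cycle vertex, or (F2) $G$ obtained from $B_2(p,q,r)$ by attaching one pendant path $P_s$ at an internal vertex of the connecting path.

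Second, establish the closed form $PS(B_2(3,3,n-6)) = 20F(n-3)$ by induction on $n$. Choose an internal vertex $w$ of the connecting path adjacent to one of the triangles; since $w$ lies on no cycle, Lemma \ref{art21}(iii) collapses to
\[
PS(B_2(3,3,n-6)) = PS(B_2(3,3,n-7)) + PS(B_2(3,3,n-8)),
\]
a Fibonacci-type recurrence. After computing the two base cases $B_2(3,3,1)$ and $B_2(3,3,2)$ directly from the Sachs-subgraph formula and checking them against $20F(n-3)$, the induction closes.

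Third, prove two monotonicity inequalities comparing the canonical forms (F0)--(F2) to $B_2(3,3,n-6)$. If $\max(p,q) \geq 4$, induct on $p+q$ to show $PS(B_2(p,q,r)) < PS(B_2(3,3,p+q+r-6))$: apply Lemma \ref{art21}(iii) at a carefully chosen degree-2 vertex of the larger cycle, use Lemmas \ref{newwl} and \ref{newwll} to bound the unicyclic residues, and re-collect terms via the Fibonacci identity of Lemma \ref{art210}. For the forms (F1) and (F2), expand $PS(G)$ via Lemma \ref{art21}(iii) at the pendant endpoint of $P_s$ so as to reduce $PS(G)$ to a linear combination of $PS$-values on smaller bicyclic or unicyclic graphs; the inductive hypothesis together with Lemma \ref{new3} then controls the forest residues and yields $PS(G) < PS(B_2(3,3,n-6))$.

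The main obstacle will be the cycle-shrinking monotonicity (the first bullet): when the larger cycle is shortened at the expense of the connecting path, the improvement has to dominate the residual term $2\sum_{C} PS(G - V(C))$ from Lemma \ref{art21}(ii), which is sensitive to the choice of the deletion vertex. The plan is to pick a cycle vertex that avoids the second cycle so that at most one cycle passes through it, bound the residual term by an exact Fibonacci product using Lemma \ref{art210}, and recognize the resulting difference as a strictly positive Fibonacci combination; this both establishes the inequality and forces uniqueness at $B_2(3,3,n-6)$.
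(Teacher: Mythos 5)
First, a point of comparison: the paper itself does not prove this lemma --- it is quoted verbatim from \cite{wu1} --- so there is no internal proof to match your attempt against. The closest internal analogue is the paper's proof of Theorem \ref{art2}, which handles $\mathscr{B}^{2}_{n}$ by deleting a bridge edge $uv$ of the connecting path, writing $PS(G)=PS(G-uv)+PS(G-\{u,v\})$ with $G-uv$ a disjoint union of two unicyclic graphs, and then bounding the two pieces by Lemmas \ref{newwl}, \ref{art42} and \ref{art43}. That decomposition is the natural route for the present lemma as well, and it bypasses the normal-form machinery you set up.

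Your proposal has two concrete gaps. (1) The reduction to the forms (F0)--(F2) is not justified: transformation {\bf II} only merges two pendant paths whose attachment vertices are joined by a path all of whose \emph{internal} vertices have degree $2$, so it cannot merge a pendant path hanging on one cycle of $B_{2}(p,q,r)$ with one hanging on the other cycle or on the connecting path --- any such connecting route passes through a degree-$3$ vertex of $B_{2}(p,q,r)$. The surviving normal forms therefore carry up to one pendant path in each region cut out by the two branch vertices (plus possible paths at the branch vertices themselves); note that even for the simpler class $\mathscr{B}^{1}_{n}$ the paper needs the six forms of Figure \ref{fig31} in the proof of Theorem \ref{art1}. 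Your case list omits most of these. (2) The recurrence $PS(B_{2}(3,3,n-6))=PS(B_{2}(3,3,n-7))+PS(B_{2}(3,3,n-8))$ does not follow from Lemma \ref{art21}$(iii)$ applied at an internal vertex $w$ of the connecting path: $w$ has two neighbours, so the formula yields three terms, and deleting $w$ disconnects the graph, so none of the resulting graphs is of the form $B_{2}(3,3,\cdot)$. The identity $PS(B_{2}(3,3,n-6))=20F(n-3)$ is true, but the clean derivation is to delete the bridge edge incident with one triangle, giving
\begin{eqnarray*}
PS(B_{2}(3,3,r))=PS(C_{3})PS(D(3,r))+PS(P_{2})PS(D(3,r-1))=36F(r+1)+24F(r)+4F(r-1)=20F(r+3),
\end{eqnarray*}
which equals $20F(n-3)$ for $n=r+6$. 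Finally, the cycle-shrinking monotonicity and the treatment of (F1)--(F2) are only sketched, so the equality characterization --- which must be carried with strictness through every transformation and comparison --- is not actually established.
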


\begin{lemma}\label{art42}
Let $U_{m}$ and $U_{t}$ be two unicyclic graphs with $m$ and $t$ vertices, respectively. If $V(U_{m})\cap V(U_{t})=\emptyset$, $m$ and $t\geq 5$, and $m+t=n$, then  $PS(U_{m}\cup U_{t})<54F(n-6)+18F(n-7)$.
\end{lemma}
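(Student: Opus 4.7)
The plan is to combine multiplicativity of $PS$ on disjoint unions with the best unicyclic upper bound and then maximise the resulting Fibonacci expression over the split $m+t=n$, $m,t\ge 5$. First, by Lemma~\ref{art21}(i) and Lemma~\ref{newwl},
$$
PS(U_m\cup U_t)=PS(U_m)\,PS(U_t)\le f(m)f(n-m),
$$
where $f(k):=6F(k-2)+2F(k-3)$ is the extremal value attained by $D(3,k-3)$. Assuming without loss of generality that $5\le m\le n/2$, the endpoint $m=5$ gives $f(5)f(n-5)=14\bigl(6F(n-7)+2F(n-8)\bigr)=84F(n-7)+28F(n-8)$, and substituting $F(n-6)=F(n-7)+F(n-8)$ and $F(n-7)=F(n-8)+F(n-9)$ reduces $54F(n-6)+18F(n-7)-f(5)f(n-5)$ to $2F(n-9)+14F(n-10)$, which is strictly positive for $n\ge 10$. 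So it suffices to establish the maximisation $f(m)f(n-m)\le f(5)f(n-5)$ for $5\le m\le n/2$.

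The maximisation is the heart of the argument. I would introduce the Lucas numbers $L(0)=2,\,L(1)=1,\,L(k)=L(k-1)+L(k-2)$ for $k\ge 2$; a short induction via the Fibonacci recurrence gives $f(k)=2L(k-1)$. Applying the Lucas product identity $L(a)L(b)=L(a+b)+(-1)^b L(a-b)$ (with the convention $L(-j)=(-1)^j L(j)$) with $a=m-1$ and $b=n-m-1$ yields
$$
L(m-1)L(n-m-1)=L(n-2)+(-1)^{m-1}L(n-2m).
$$
Since $5\le m\le n/2$, the index $n-2m$ is in $[0,n-10]$, and $L(n-2m)\le L(n-10)$: this is the monotonicity of $L$ on $[1,\infty)$, and the anomalous pair $L(0)=2>L(1)=1$ never arises since $n-2m=0$ forces $n$ even while $n-10=1$ forces $n$ odd. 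Hence $(-1)^{m-1}L(n-2m)\le L(n-10)$ with equality precisely at $m=5$, and multiplying by $4$ gives $f(m)f(n-m)\le f(5)f(n-5)$.

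The main obstacle is precisely this maximisation step: since $f(m)\sim c\phi^m$, both factors grow exponentially and $f(m)f(n-m)$ is of order $\phi^n$ independently of the split, so the inequality rests on cancellation in the lower-order correction term rather than on any gain in the leading order. The Lucas product identity packages this correction into a single term whose magnitude is controlled by the monotonicity of $L$, making the argument transparent. A purely Fibonacci presentation, obtained by expanding $f(m)f(n-m)$ into the four cross-products $F(m-i)F(n-m-j)$ and repeatedly applying Lemma~\ref{art210}, is possible but noticeably longer.
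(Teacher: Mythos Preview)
Your proof is correct and takes a genuinely different route from the paper's. The paper does not first maximise $f(m)f(n-m)$ over the split; instead it subtracts $f(m)f(t)$ directly from $54F(n-6)+18F(n-7)$ for arbitrary $m,t\ge 5$ with $m+t=n$, expands each Fibonacci cross-product via Lemma~\ref{art210}, and collects terms into the shape
\[
F(t-4)\bigl[26F(m-3)-12F(m-2)\bigr]+F(t-5)\bigl[2F(m-3)+6F(m-2)\bigr],
\]
which is positive because $26F(m-3)-12F(m-2)=2F(m-4)+14F(m-5)>0$ for $m\ge 5$. Your Lucas-number argument is more structural: the rewriting $f(k)=2L(k-1)$ together with the product identity $L(a)L(b)=L(a+b)\pm L(a-b)$ explains \emph{why} the extreme split $m=5$ is worst---the leading term $L(n-2)$ is independent of $m$ and the correction $\pm L(n-2m)$ is largest in absolute value when $|n-2m|$ is largest---whereas the paper's direct expansion, though shorter on the page, buries this reason inside a single opaque algebraic step. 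Your closing paragraph already anticipates the paper's method as the ``purely Fibonacci presentation''; in practice the paper simply compresses that presentation into one line without the intermediate work.
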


\begin{proof}
By Lemmas \ref{art21}~and \ref{newwl}, We get that $
PS(U_{m}\cup U_{t})\leq[6F(t-2)+2F(t-3)][6F(m-2)+2F(m-3)]=F(n-t-2)[36F(t-2)+12F(t-3)]
+F(n-t-3)[12F(t-2)+4F(t-3)]$,
where the equality holds if and only if $U_{m}\cup U_{t}=D(3,t-3)\cup D(3,m-3)$.
$54F(n-6)+18F(n-7)-PS(U_{m}\cup U_{t})\geq54F(n-6)+18F(n-7)-[6F(t-2)+2F(t-3)][6F(m-2)+2F(m-3)]
=F(t-4)[26F(n-t-3)-12F(n-t-2)]+F(t-5)[2F(n-t-3)+6F(n-t-2)]>0$.
\end{proof}

\begin{lemma}(\cite{wud})\label{art446}
Let $U_{4}$ be a unicyclic graph with $4$ vertices. Then $PS(U_{4})\leq9$ with equality holding if and only if $U_{4}\cong~C_{4}$.
\end{lemma}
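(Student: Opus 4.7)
The plan is to prove this by a direct enumeration argument, since there are very few unicyclic graphs on $4$ vertices. A unicyclic graph on $n=4$ vertices has exactly $4$ edges and contains exactly one cycle, so its unique cycle must be a $C_3$ or a $C_4$. This immediately classifies the candidates: either $U_4 \cong C_4$, or $U_4 \cong D(3,1)$ (the triangle with one pendant vertex attached). Thus the lemma reduces to computing $PS$ for these two graphs and comparing.

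The first step is to compute $PS(C_4)$ using the Sachs formula $PS(G)=\sum_{H}2^{c(H)}$. The Sachs subgraphs of $C_4$ are: the empty subgraph (contribution $1$); the four single edges, each a Sachs subgraph on $2$ vertices (contribution $4\cdot 2^{0}=4$); the two perfect matchings (contribution $2\cdot 2^{0}=2$); and $C_4$ itself (contribution $2^{1}=2$). Summing gives $PS(C_4)=1+4+2+2=9$.

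The second step is the analogous computation for $D(3,1)$. Its Sachs subgraphs are: the empty subgraph ($1$); the four single edges ($4$); the triangle, which is a Sachs subgraph on $3$ vertices with one cycle ($2$); and the unique perfect matching obtained from the pendant edge together with the opposite edge of the triangle ($1$). This gives $PS(D(3,1))=1+4+2+1=8$. Alternatively, one can use Lemma \ref{art21}$(iii)$ applied to the pendant vertex to reduce $D(3,1)$ to $C_3 \cup \emptyset$ plus $P_1$-based terms, obtaining the same value.

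Comparing the two values, $PS(C_4)=9 > 8 = PS(D(3,1))$, so the maximum is $9$ and is uniquely attained by $C_4$. There is essentially no obstacle here — the only thing to be careful about is making sure the enumeration of Sachs subgraphs on $4$ vertices in each case is exhaustive (in particular, in $D(3,1)$ one must check that there is no Sachs subgraph on $4$ vertices other than the perfect matching $\{14,23\}$, which is clear because the pendant vertex must lie on its unique incident edge).
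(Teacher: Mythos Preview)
Your proof is correct; the enumeration of unicyclic graphs on four vertices is complete, and both computations of $PS$ are accurate. Note that the paper does not actually prove this lemma at all: it is quoted from \cite{wud} without argument. Your direct enumeration via the Sachs formula is therefore a self-contained improvement over what appears in the paper, and since the case analysis is so small there is nothing to be gained from any more elaborate approach.
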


\begin{lemma}\label{art43}
Let $H$ be a forest with $m$ vertices, and let $U_{t}$ be a unicyclic graph with $t$ vertices. If $m\geq1$, $t\geq3$ and $n=m+t\geq7$, then $PS(H\cup U_{t})\leq6F(n-2)$, where the equality holds if and only if $H\cup U_{t}=P_{n-3}\cup C_{3}$.
\end{lemma}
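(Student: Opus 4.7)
The plan is to decouple the product via Lemma \ref{art21}(i): $PS(H\cup U_t)=PS(H)\cdot PS(U_t)$. I would then bound $PS(H)$ by Lemma \ref{newwul2}, obtaining $PS(H)\leq PS(P_m)=F(m+1)=F(n-t+1)$ with equality iff $H\cong P_m$, and bound $PS(U_t)$ by the appropriate unicyclic extremal estimate, splitting on $t\in\{3\},\{4\},\{\geq 5\}$.

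For $t=3$, the only unicyclic graph on three vertices is $C_3$, and a direct evaluation (or the formula of Lemma \ref{newwl} at $n=3$) gives $PS(C_3)=6$. Thus $PS(H\cup C_3)\leq 6F(n-2)$ with equality iff $H\cong P_{n-3}$, which matches exactly the claimed extremal configuration $P_{n-3}\cup C_3$. For $t=4$, Lemma \ref{art446} yields $PS(U_4)\leq 9$, hence $PS(H\cup U_4)\leq 9F(n-3)$, and the Fibonacci recurrence gives
$$6F(n-2)-9F(n-3)=6F(n-4)-3F(n-3)=3F(n-4)-3F(n-5)=3F(n-6)>0$$
for $n\geq 7$, so the target inequality is strict in this case.

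For $t\geq 5$, Lemma \ref{newwl} gives $PS(U_t)\leq 6F(t-2)+2F(t-3)$, and therefore
$$PS(H\cup U_t)\leq F(n-t+1)\bigl[6F(t-2)+2F(t-3)\bigr].$$
I would then apply Lemma \ref{art210} with $k=t-2$ to rewrite $F(n-2)=F(t-2)F(n-t+1)+F(t-3)F(n-t)$. Substituting, the difference $6F(n-2)-F(n-t+1)[6F(t-2)+2F(t-3)]$ collapses to
$$2F(t-3)\bigl[3F(n-t)-F(n-t+1)\bigr]=2F(t-3)\bigl[2F(n-t)-F(n-t-1)\bigr],$$
which is strictly positive because $t\geq 5$ forces $F(t-3)\geq 1$ and $m=n-t\geq 1$ forces $2F(n-t)>F(n-t-1)$ (checked trivially for $m\in\{1,2\}$ and by monotonicity of $F$ for $m\geq 3$). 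So the inequality is again strict.

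The main obstacle is purely the bookkeeping in the $t\geq 5$ case: choosing the correct splitting index $k=t-2$ in Lemma \ref{art210} so that the extremal unicyclic bound $6F(t-2)+2F(t-3)$ aligns cleanly against $6F(n-2)$, and then verifying positivity of the residual Fibonacci expressions across the full parameter range $t\geq 5$, $m\geq 1$. Once that is set up, everything reduces to elementary Fibonacci identities, and the equality characterization drops out of the equality cases in Lemmas \ref{newwul2} and \ref{newwl} together with the fact that strict inequality holds whenever $t\geq 4$.
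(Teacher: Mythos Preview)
Your proposal is correct and follows essentially the same approach as the paper: the same three-way case split on $t\in\{3\},\{4\},\{\geq 5\}$, the same use of Lemmas~\ref{art21}(i), \ref{newwul2}, \ref{art446}, \ref{newwl}, and \ref{art210}, and the same identification of the equality case. You have simply made explicit the Fibonacci manipulations (the computation of $6F(n-2)-9F(n-3)=3F(n-6)$ and the reduction of the $t\geq 5$ difference to $2F(t-3)[2F(n-t)-F(n-t-1)]$) that the paper leaves implicit.
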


\begin{proof} Suppose that  $t=3$. By Lemma \ref{newwul2}, we have $PS(H\cup U_{t})\leq6F(n-2)$ with equality holding if and only if $H\cup U_{t}=P_{n-3}\cup C_{3}$.

Suppose that $t=4$. By Lemmas \ref{newwul2} and \ref{art446}, we obtain that $PS(H\cup U_{t})\leq9F(n-3)<6F(n-2)$ with equality holding if and only if $H \cup U_{t}=P_{n-4}\cup C_{4}$.

 Assume that $t\geq5$. By Lemmas \ref{art21}, \ref{art210} and \ref{newwl}, we obtain that $PS(H\cup~U_{t})\leq F(m+1)[6F(t-2)+2F(t-3)]<6F(n-2)$ with equality holding if and only if $H\cup U_{t}=P_{m}\cup D(3,t-3)$.
\end{proof}
By simply computing, we have
\begin{lemma}\label{art44}
If $n\geq9$, then $54F(n-6)+18F(n-7)+6F(n-4)\leq36F(n-5)+24F(n-6)
$, where the equality holds if and only if $n=9$.
\end{lemma}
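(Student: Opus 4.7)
The plan is to reduce the claimed inequality to a trivial statement about two consecutive Fibonacci numbers by repeated application of the Fibonacci recurrence $F(m)=F(m-1)+F(m-2)$.

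First I would move everything to one side, obtaining the equivalent inequality
\begin{equation*}
36F(n-5)+24F(n-6) - 54F(n-6)-18F(n-7)-6F(n-4) \;\geq\; 0,
\end{equation*}
i.e.\ $36F(n-5) \geq 30F(n-6)+18F(n-7)+6F(n-4)$. Then I would eliminate $F(n-4)$ using $F(n-4)=F(n-5)+F(n-6)$, which collects like terms to give the simpler condition
\begin{equation*}
30F(n-5) \;\geq\; 36F(n-6)+18F(n-7).
\end{equation*}
Next I would eliminate $F(n-5)$ on the left via $F(n-5)=F(n-6)+F(n-7)$, which after cancellation reduces the claim to
\begin{equation*}
F(n-6) \;\leq\; 2F(n-7),
\end{equation*}
and finally, using $F(n-6)=F(n-7)+F(n-8)$, to the single-term inequality $F(n-8)\leq F(n-7)$.

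For $n\geq 9$ we have $n-8\geq 1$ and $n-7\geq 2$, so this last inequality is immediate from monotonicity of the Fibonacci sequence (together with $F(1)=F(2)=1$). Tracking the equality case back through the chain of equivalences, equality holds throughout precisely when $F(n-8)=F(n-7)$, which occurs only for $n-8=1$ and $n-7=2$, i.e.\ $n=9$. Since every step in the reduction is an equivalent rewriting, no step is really an obstacle; the only care needed is to make sure the Fibonacci indexing in the paper matches $F(1)=F(2)=1$ (as used implicitly throughout the earlier lemmas), so that the base comparison $F(1)\leq F(2)$ is indeed an equality rather than a strict inequality. Given that, the proof is essentially a two-line computation.
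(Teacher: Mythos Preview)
Your reduction is correct: repeated use of $F(m)=F(m-1)+F(m-2)$ collapses the claimed inequality to $F(n-8)\leq F(n-7)$, which for $n\geq 9$ (with the paper's convention $F(1)=F(2)=1$) is immediate and is an equality exactly at $n=9$. The paper itself offers no argument beyond ``by simply computing,'' so your explicit calculation is precisely the intended proof, just written out in full.
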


{\bf The proof of Theorem \ref{art2}}.
Let $U_{s}$ and $U_{n-s}$ be two unicyclic graphs with  $s$ vertices and $n-s$ vertices, respectively. Suppose that $u$ is a vertex of induced cycle in $U_{s}$ and $v$ is a vertex of $U_{n-s}$. It is easy to see that  if $G\in\mathscr B_{n}(p,q,r)$ has $n(\geq 11)$ vertices, then $G$ can be obtained by connecting $u$ in $U_{s}$ and $v$ in $U_{n-s}$.
By Lemma \ref{art21}, we get that
\begin{eqnarray}\label{X4}
 PS(G)=PS(G-uv)+PS(G-\{u,v\}).
\end{eqnarray}
The following we discuss the value of $PS(G)$ by (\ref{X4}).

Suppose that $s=3$ in $G$.  Set that $G-\{u,v\}$ is the disjoint union of a forest $H$ with $n-t-2$ vertices and unicyclic graph $U_{t}$ with $t$ vertices, where $3\leq t\leq n-4$. If  the number of components of $H$ is more than one.  By Lemma \ref{newwl}, we get that
$PS(G-uv)=PS(C_{3})PS(U_{n-3})\leq36F(n-5)+12F(n-6)$
 with equality holding if and only if $G-uv=C_{3}\cup~D(3,n-6)$. By Lemma \ref{art43}, we have $PS(G-\{u,v\})\leq~PS(P_{2})\times6F(n-6)=12F(n-6)$
with equality holding if and only if $G-\{u,v\}=P_{2}\cup~P_{n-7}\cup~C_{3}$. According the arguments as above, we get that
$PS(G)\leq36F(n-5)+24F(n-6)$ with equality holding if and only if $G\cong B_{2}'(3,3,1,n-7)$.
If the number of components of $H$ is equal to $1$. Then $G-u-v=P_{2}\cup U_{n-4}$. This implies that
$v$ is the vertex of degree 2 in $G$.
 By Lemma \ref{newwl}, we get
\begin{eqnarray}\label{X9}
PS(G-u-v)=PS(P_{2})PS(U_{n-4})\leq12F(n-6)+4F(n-7)
\end{eqnarray}
with equality holding if and only if $U_{n-4}\cong~D(3,n-7)$. Since $G\ncong B_{2}(3,3,n-6)$, we have
$U_{n-3}\ncong D(3,n-6)$. Thus,
 by Lemma \ref{newwll}, we obtain that
\begin{eqnarray}\label{X10}
PS(G-uv)=PS(C_{3})PS(U_{n-3})\leq36F(n-5)+24F(n-8)
\end{eqnarray}
 with equality holding if and only if $U_{n-3}\cong D'(3,3,n-6)$.  Checking the structure of $G$,
we know that the equality in (\ref{X9}) and (\ref{X10}) cannot be obtained at the same time. Thus,
$PS(G)<36F(n-5)+24F(n-8)+12F(n-6)+4F(n-7)<36F(n-5)+24F(n-6)$.
Set that $G-\{u,v\}$ is a forest with $n-2$ vertices. This implies that  $G-\{u,v\}$ contains at least two components. By Lemma \ref{newwl}, we get that
$PS(G-uv)=PS(C_{3})PS(U_{n-3})\leq36F(n-5)+12F(n-6)$. By Lemma \ref{new3}, we obtain that
$PS(G-\{u,v\})\leq2F(n-3)<12F(n-6)$. Thus $PS(G)<36F(n-5)+24F(n-6)$.

Assume that $s=4$ in $G$. Checking the structure of $G$, we know that $G$ has three possible structures.
One is $H_{1}$ obtained by joining a vertex of $C_{4}$ to a vertex of $U_{4}$.  Others are obtained by connecting a vertex of degree 2 (or 3) in $D(3,1)$ and a vertex of $U_{4}$, denoted by $H_{2}$ and $H_{3}$. Similar to the proof as above,
by   Lemmas \ref{newwul2} and  \ref{art43}, it is easy to obtain that $PS(H_{i}-\{u,v\})<6F(n-4)$, where $i=1, 2, 3$. By Lemma \ref{newwl}, we have $PS(H_{i}-uv)=PS(U_{4})PS(U_{n-4})\leq54F(n-6)+18F(n-7)$, where $i=1, 2, 3$. By Lemma \ref{art44} and arguments as above, we get that $PS(G)<36F(n-5)+24F(n-6)$.

Suppose that $s\geq5$ and $n-s\geq5$ in $G$. By Lemma \ref{art42}, we get that
$PS(G-uv)=PS(U_{n-s})PS(U_{s})<54F(n-6)+18F(n-7)$.
Next, we determine the value of $PS(G-\{u,v\})$.

Assume that $G-\{u,v\}$ is the disjoint union of forest with $s-1$ vertices and unicyclic graph $U_{n-s-1}$ with $n-s-1$ vertices. Since $n-s\geq5$, we get that $U_{n-s-1}\ncong~C_{3}$ and $G-\{u,v\}\ncong~C_{3}\cup~P_{n-5}$.  By Lemma \ref{art43} and $n\geq11$, we obtain that $PS(G-\{u,v\})<6F(n-4)$. By Lemma \ref{art44}, $PS(G)<36F(n-5)+24F(n-6)$.

Assume that $G-\{u,v\}$ is a forest with $n-2$ vertices. Then $G-\{u,v\}$ contains at least two components.  By Lemma \ref{new3}, we have $PS(G-\{u,v\})\leq2F(n-3)<6F(n-4)$. Thus, by Lemma \ref{art44}, we have $PS(G)<36F(n-5)+24F(n-6)$.

Assume that $G-\{u,v\}$ is the disjoint union of a forest with $n-t-2$ vertices and unicyclic graph $U_{t}$ with $t$ vertices, where $3\leq t\leq n-s-2$. By Lemma \ref{art43}, we have $PS(G-\{u,v\})<6F(n-4)$. By Lemma \ref{art44}, $PS(G)<36F(n-5)+24F(n-6)$.

The proof is completed. $\Box$

\section{ The proof of Theorem \ref{art3}}
\begin{lemma}\label{art51}(\cite{wu1})
Let $G$ be a unicyclic graph with $n$ vertices and girth $r$. Then\\
$(i)$ $PS(G)\leq PS(D(r,n-r))~\text{with  equality holding if and only if}~G\cong D(r,n-r)$.\\
$(ii)$ If $n\geq5$, then $PS(D(r,n-r))< PS(D(r-1,n-r+1))$.
\end{lemma}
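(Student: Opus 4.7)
For part (i), I propose a two-step reduction. \emph{Step 1 (tree-to-path).} For each cycle vertex $v$ whose attached tree $T$ is not already a path, invoke Theorem \ref{art23} (Type I) at $v$ to replace $T$ by a path on $|T|$ vertices anchored at $v$; this strictly increases the permanental sum. After Step 1 the candidate consists of $C_r$ with paths $P_{s_1},\dots,P_{s_k}$ attached at distinct cycle vertices $v_1,\dots,v_k$, where $s_1+\cdots+s_k=n-r$. \emph{Step 2 (path consolidation).} If $k\ge 2$, pick two consecutive carriers $v_i,v_j$ on $C_r$ (``consecutive'' meaning one of the two cycle-arcs between them contains no other carrier), and apply Theorem \ref{art25} (Type II) to that arc with $P_{s_i}$ and $P_{s_j}$ as the endpoint-attached paths. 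The theorem produces a graph with strictly larger $PS$ in which $P_{s_i}$ and $P_{s_j}$ have been merged onto a single carrier, lowering $k$ by one. Iterating, $k$ drops to $1$, and a final Type I step (if needed) turns the lone attached tree into a path of $n-r$ vertices, yielding $D(r,n-r)$ as the unique maximum.

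For part (ii), a direct calculation via Lemma \ref{art21}. Expanding both $PS(D(r,n-r))$ and $PS(D(r-1,n-r+1))$ by Lemma \ref{art21}(ii) at the cycle edge incident to the unique vertex of degree $3$ (so the sum $2\sum_{C_k\in\mathcal{C}(uv)}\cdots$ contributes just one term, namely $2\,PS(P_{n-r-1})$ or $2\,PS(P_{n-r})$), and factoring the resulting disconnected subgraphs via Lemma \ref{art21}(i), expresses each permanental sum as a short linear combination of path-values $PS(P_m)=F(m+1)$. The desired strict inequality $PS(D(r,n-r))<PS(D(r-1,n-r+1))$ then reduces to a positive Fibonacci identity that is dispatched by Lemma \ref{art210}.

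The main obstacle is a corner case in Step 2 of part (i): Theorem \ref{art25} requires $p\ge 2$ interior degree-$2$ vertices on the chosen arc, so when $r=3$, or when $r=4$ with the two selected carriers adjacent on the cycle, no such arc is available. In these small-girth cases the consolidation must instead be proved directly, via Lemma \ref{art21}(ii) applied to the cycle edge joining the two carriers, followed by a careful comparison of the resulting forest/unicyclic pieces using Lemma \ref{art21}(i) and the Fibonacci identity of Lemma \ref{art210}. Handling this boundary of the induction --- verifying cleanly that even in the degenerate arc situations path-consolidation strictly increases $PS$ --- is where I expect the bulk of the work to lie.
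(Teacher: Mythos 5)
The paper offers no proof of this lemma to compare against: it is imported verbatim from \cite{wu1}. Judged on its own, your outline for part $(i)$ uses exactly the paper's toolkit (Theorem \ref{art23} to straighten attached trees, Theorem \ref{art25} to consolidate attached paths onto one cycle vertex), and it does work whenever the consolidation step is available. But the step you defer is the real content. Theorem \ref{art25} needs $uu_1\ldots u_pv$ to be a genuine path with all $u_i$ of degree $2$, and on the cycle this forces the carrier-free arc between the two chosen carriers to have length at most $r-3$. Hence the transformation is never available when $r=3$, and for $r=4$ it is the \emph{opposite} pair of carriers that fails (the two outer neighbours $u$ and $v$ then coincide), not the adjacent pair as you state --- for adjacent carriers on $C_4$ the path $u\,v_i\,v_j\,v$ has four distinct vertices and Type {\bf II} applies. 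Since girth $3$ is precisely the case this paper leans on (Lemmas \ref{art35}, \ref{art53}, the extremal graphs all being built on triangles), "handle the degenerate arcs directly" is not a corner case but the bulk of the proof of $(i)$, and it is missing.

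For part $(ii)$ the promised "positive Fibonacci identity" does not exist. Carrying out your own reduction gives $PS(D(r,m))=F(n+1)+\bigl(F(r-1)+2\bigr)F(m+1)$ with $m=n-r$, so the difference $PS(D(r-1,n-r+1))-PS(D(r,n-r))=\bigl(F(r-2)+2\bigr)F(n-r+2)-\bigl(F(r-1)+2\bigr)F(n-r+1)$ contains a d'Ocagne-type product difference whose sign alternates, and it can vanish: for $n=10$ one computes $PS(D(8,2))=F(11)+F(3)F(7)+2F(3)=119$ and $PS(D(7,3))=F(11)+F(4)F(6)+2F(4)=119$. So the strict inequality in $(ii)$, as literally stated, fails at $(r,n)=(8,10)$, and no proof can "dispatch" it; the statement needs a non-strict form or a restriction on $r$ (the applications in Lemmas \ref{art53} and \ref{art54} only need small $r$, where the inequality is indeed strict). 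Your reduction is the right computation to do --- doing it to the end would have revealed the problem.
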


Checking $B_{3}(p,q,r)$ in $\mathscr{B}^{3}_{n}(p,q,r)$, we know that $p\geq 0$, $q\geq 0$ and $r\geq 0$, and at most one of them
is $0$. Without loss of generality, suppose that $p\geq 1$, $q\geq 1$ and $r\geq 0$ in $B_{3}(p,q,r)$. Thus
 $\mathscr{B}^{3}_{n}(p,q,r)=\mathscr{B}^{3}_{n}(p,q,0)\cup
 \{\mathscr{B}^{3}_{n}(p,q,r)-\mathscr{B}^{3}_{n}(p,q,0)\}$. Let $B_{3}'(1,1,0,n-4)$ be a graph obtained by joining a vertex of degree 2 in $B_{3}(1,1,0)$ and a pendant vertex of $P_{n-4}$, and let $B_{3}^{1}(1,1,0,n-4)$ be a graph obtained by joining a vertex of degree 3 in $B_{3}(1,1,0)$ and a pendant vertex of $P_{n-4}$.
\begin{lemma}\label{art53}
Let $G\in \mathscr{B}^{3}_{n}(p,q,0)$ be a bicyclic graph with $n$ vertices. If $n\geq 6$, then
\begin{eqnarray*}
PS(G)\leq 14F(n-3)+6F(n-4),
\end{eqnarray*}
where the equality holds if and only if $G\cong B_{3}'(1,1,0,n-4)$.
\end{lemma}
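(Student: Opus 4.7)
The strategy is to transform an arbitrary $G\in\mathscr B_n^3(p,q,0)$ into the claimed extremal graph $B_3'(1,1,0,n-4)$ through a sequence of $PS$-increasing steps, paralleling Section~3. Recall that $B_3(p,q,0)$ consists of two cycles of lengths $p+2$ and $q+2$ sharing a common edge $uv$, whose two endpoints $u,v$ have degree $3$ and whose other $p+q$ arm-vertices have degree $2$.

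First, by Theorem~\ref{art23} (Type I), any non-path tree attached to $B_3(p,q,0)$ may be replaced by a pendant path of the same order with strict increase in $PS$. Then by Theorem~\ref{art25} (Type II), along any arm of length $\geq 3$ (i.e., $p\geq 2$ or $q\geq 2$), pendant paths attached at the interior degree-$2$ vertices of that arm may be merged into a single pendant path at a distinguished endpoint vertex of the arm, again strictly increasing $PS$. After these reductions $G$ has the form $B_3(p,q,0)$ with at most a few pendant paths attached at distinguished vertices of the core.

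Next, I reduce $(p,q)$ to $(1,1)$. For $p\geq 2$, I apply Lemma~\ref{art21}(ii) to an edge of the arm of length $p+1$; this rewrites $PS(G)$ as a Fibonacci-weighted combination of $PS$-values of smaller graphs. Termwise comparison with the analogous expansion for $B_3(p-1,q,0)$ (with pendants redistributed), together with Lemma~\ref{art210}, yields a strict increase, mirroring the bicyclic analogue of Lemma~\ref{art51}(ii). Iterating brings us to $p=q=1$, i.e., $G=K_4-e$ with pendant paths at some subset of its four vertices. Applying Lemma~\ref{art21}(ii) to the shared edge $uv$ expresses $PS(G)$ in each such configuration as a sum of products of $PS$-values on disjoint unions of paths; a direct Fibonacci computation then shows that consolidating all pendants into a single pendant path attached at a degree-$2$ vertex strictly maximizes $PS$. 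By symmetry this extremal configuration is exactly $B_3'(1,1,0,n-4)$.

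For the closed form, I apply Lemma~\ref{art21}(iii) at the pendant endpoint of the $P_{n-4}$ branch of $B_3'(1,1,0,n-4)$. Since this endpoint is a leaf lying on no cycle, the lemma gives the Fibonacci recurrence
\begin{equation*}
PS(B_3'(1,1,0,n-4)) = PS(B_3'(1,1,0,n-5)) + PS(B_3'(1,1,0,n-6)), \qquad n\geq 6,
\end{equation*}
with base values $PS(B_3(1,1,0))=14$ and $PS(B_3'(1,1,0,1))=20$; a routine induction using Lemma~\ref{art210} then confirms the formula $14F(n-3)+6F(n-4)$. The main obstacle is the cycle-shrinking step $p\to p-1$: one must carefully bookkeep how the pendant paths redistribute when an arm edge is removed and verify that every Fibonacci coefficient in the resulting comparison has the correct sign, which ultimately reduces to standard Fibonacci identities.
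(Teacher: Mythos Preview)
Your plan diverges from the paper's argument, and the divergence is precisely where the gap lies. The paper does \emph{not} attempt to shrink the core $B_3(p,q,0)$ down to $B_3(1,1,0)$ by an inductive ``$p\to p-1$'' step. Instead, for $p+q+2\geq 5$ it expands $PS(G)$ along the shared edge $uv$ via Lemma~\ref{art21}(ii):
\[
PS(G)=PS(G-uv)+PS(G-\{u,v\})+2PS(G-C_{p+2})+2PS(G-C_{q+2}),
\]
and bounds each piece directly: $G-\{u,v\}$ is a forest with at least two components, so $PS(G-\{u,v\})\le 2F(n-3)$ by Lemma~\ref{new3}; the cycle-deletions give forests bounded by Lemma~\ref{newwul2}; and $G-uv$ is unicyclic of girth $\ge 5$, handled by Lemma~\ref{art51}. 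Only the case $p=q=1$ is treated separately, and there the paper again expands along a suitable edge rather than comparing all pendant configurations on $K_4-e$.

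Your cycle-shrinking step is the problem. You assert that deleting an arm edge and comparing termwise with $B_3(p-1,q,0)$ (with redistributed pendants) gives a strict increase ``mirroring'' Lemma~\ref{art51}(ii), but that lemma is for unicyclic graphs with a single pendant path and no attached trees; here you have two cycles sharing an edge plus pendant paths at up to four locations, and Theorem~\ref{art25} only guarantees $PS(G_1)<PS(G_2)$ \emph{or} $PS(G_1)<PS(G_3)$, so you cannot control where the consolidated pendant lands. Making the termwise Fibonacci comparison go through in this generality is not a bookkeeping exercise---it is the whole content of the lemma, and you have not supplied it. The paper's route sidesteps this entirely by bounding the large-core case in one shot.
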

\begin{proof}

Let $G\in \mathscr{B}^{3}_{n}(p,q,0)$ be a bicyclic graph with $n\geq 6$ vertices. Without loss of generality, suppose $p\leq q$. Now  we consider  two cases as follows.

{\bf Case 1.}  Suppose that  $p+q+2\geq5$ in induced subgraph $B_{3}(p,q,0)$ of $G$.  For convenience, assume that $u$  and $v$ are the vertices of degree 3 in induced subgraph $B_{3}(p,q,0)$ of  $G$. By Lemma \ref{art21}, we have that $PS(G)=PS(G-uv)+PS(G-\{u,v\})+2PS(G-C_{p+2})+2PS(G-C_{q+2})$.
By Lemma \ref{new3}, we obtain that $PS(G-\{u,v\})\leq2F(n-3)$ with equality holding if and only if $G-\{u,v\}=P_{2}\cup P_{n-4}$. Since $p\leq q$ and $p+q+2\geq5$, we have $p\geq1$ and $q\geq2$.
 By Lemma \ref{newwul2}, we  get that $PS(G-C_{p+2})\leq F(n-2)$ and $PS(G-C_{q+2})\leq F(n-3)$.
Thus, we get that
\begin{eqnarray}\label{S1}
PS(G)\leq PS(G-uv)+4F(n-3)+2F(n-2).
\end{eqnarray}

Assume that $G-uv=C_{p+q+2}$. By (\ref{S1}),  we have $PS(G)\leq F(n+1)+3F(n-1)+2F(n-3)+2=13F(n-3)+6F(n-4)+2$. If $n=6$, then $G$ is $B_{3}(2,2,0)$ or $B_{3}(1,3,0)$.
By Lemma \ref{art21}, we obtain  that $PS(B_{3}(2,2,0))=32$, $PS(B_{3}(1,3,0))=31$ and $PS(B_{3}^{'}(1,1,0,2))=34$. So, $PS(B_{3}^{'}(1,1,0,2))>PS(G)$. If
 $n\geq7$, $PS(B_{3}^{'}(1,1,0,n-4))-PS(G)\geq[14F(n-3)+6F(n-4)]-[13F(n-3)+6F(n-4)+2]=F(n-3)-2\geq~F(4)-2=1>0$. So,~$PS(B_{3}^{'}(1,1,0,n-4))>PS(G)$.

Suppose that $G-uv$ is a unicyclic graph with $n$ vertices excluding cycle $C_{p+q+2}$.  By Lemma \ref{art51}, and $p+q+2\geq5$, we have that $PS(G-uv)\leq~13F(n-4)+5F(n-5)$~with equality holding if and only if $G-uv\cong~D(5,n-5)$. From (\ref{S1}), we have  $PS(G)\leq13F(n-4)+5F(n-5)+4F(n-3)+2F(n-2)=11F(n-3)+10F(n-4)< 14F(n-3)+6F(n-4)$.

{\bf Case 2.} Assume that  $p+q+2=4$ in induced subgraph $B_{3}(p,q,0)$ of $G$. This implies that $p=q=1$.  Set that $G$ is obtained by identifying a vertex of $B_{3}(p,q,0)$ and a vertex of tree. By theorem \ref{art23}, $PS(G)\leq  PS(B_{3}^{'}(1,1,0,n-4))$ or $PS(G)\leq PS(B_{3}^{1}(1,1,0,n-4))$.
By Lemma \ref{art21}, we have  $PS(B_{3}^{'}(1,1,0,n-4))-PS(B_{3}^{1}(1,1,0,n-4))=3F(n-4)>0$. So,
$PS(G)\leq PS(B_{3}^{'}(1,1,0,n-4))=14F(n-3)+6F(n-4)$.  Set that $G$ is obtained by identifying some trees to at least two vertices of $B_{3}(p,q,0)$.
 There exists an edge $e(e\neq uv)$ of $B_{3}(1,1,0)$ in $G$ which has at least an endpoint   attached a tree.
By Lemma \ref{art21}, we obtain that $PS(G)=PS(G-e)+PS(G-V(e))+2PS(G-C_{3})+2PS(G-C_{4})$.
 By Lemma \ref{new3}, we have
$PS(G-V(e))\leq2PS(P_{n-4})$, $PS(G-C_{3})\leq2PS(P_{n-5})$ and $PS(G-C_{4})\leq2PS(P_{n-6})$.
By the structure of $G$, $G-e\neq D(3,n-3)$. By Lemma \ref{newwl}, we get $PS(G-e)<PS(D(3,n-3))$. So,
$PS(G)=PS(G-e)+PS(G-V(e))+2PS(G-C_{3})+2PS(G-C_{4})$
$<PS(D(3,n-3))+2PS(P_{n-4})+4PS(P_{n-5})+4PS(P_{n-6})$
$=PS(D(3,n-3))+6PS(P_{n-4})=PS(B_{3}^{'}(1,1,0,n-4))$.
\end{proof}

\begin{lemma}\label{art54}
Let $G\in \mathscr{B}^{3}_{n}(p,q,r)-\mathscr{B}^{3}_{n}(p,q,0)$ be a bicyclic graph with $n$ vertices. If $n\geq 5$, then
\begin{eqnarray*}
PS(G)< PS(B_{3}'(1,1,0,n-4)).
\end{eqnarray*}
\end{lemma}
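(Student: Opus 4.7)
My plan is to mirror the strategy used for Lemma \ref{art53} (the $r=0$ case), namely, reduce $G$ to a canonical form via the two transformation theorems and then apply the edge-deletion recursion from Lemma \ref{art21}. The critical difference is that, when $r\geq 1$, the two degree-$3$ vertices of $B_{3}(p,q,r)$ are no longer adjacent, so the edge used in the recursion must be chosen on the shared path of the theta.

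First, by Theorem \ref{art23}, every attached tree in $G$ can be replaced with a pendant path without decreasing $PS$, and by Theorem \ref{art25} the resulting pendant paths can then be consolidated along each degree-$2$ segment of $B_{3}(p,q,r)$. It therefore suffices to prove the inequality for graphs of the form $G=B_{3}(p,q,r)+P_{s}$, with $r\geq 1$, $p+q+r+s+2=n$, and $P_{s}$ a single pendant path attached at one vertex. Label the shared path $u_{0}u_{1}\cdots u_{r+1}$ with $u_{0}$ and $u_{r+1}$ the degree-$3$ endpoints, and set $e=u_{0}u_{1}$. Since $r\geq 1$, the edge $e$ lies on both cycles of $G$, and Lemma \ref{art21}(ii) yields
\begin{eqnarray*}
PS(G)=PS(G-e)+PS(G-\{u_{0},u_{1}\})+2PS(G-V(C_{p+r+2}))+2PS(G-V(C_{q+r+2})).
\end{eqnarray*}
Here $G-e$ is unicyclic, its cycle being the ``third'' cycle of the theta of length $p+q+2\geq 4$; $G-\{u_{0},u_{1}\}$ is a forest on $n-2$ vertices obtained by collapsing the theta; and each of $G-V(C_{p+r+2})$, $G-V(C_{q+r+2})$ is a forest of smaller order (removing a whole cycle of the theta kills every remaining cycle). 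Bounding the four summands by Lemmas \ref{newwl}, \ref{art51}, \ref{newwul2} and \ref{new3}, and then repeatedly applying Lemma \ref{art210}, expresses the total as a linear combination of Fibonacci numbers that is directly comparable with the target $14F(n-3)+6F(n-4)$.

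The main obstacle I expect is the bound on $PS(G-e)$ when the pendant $P_{s}$ is attached at $u_{r+1}$: in this configuration, $G-e$ has cycle $C_{p+q+2}$ of girth at least $4$ together with two pendant paths hanging from the same cycle vertex, which is not captured by the extremal graph $D(3,n-3)$ of Lemma \ref{newwl}. I would use Lemma \ref{art51}(ii) to quantify the strict loss forced by the larger girth, handle the small case $p+q+r+2=5$ (where the bicyclic kernel of $G$ is $K_{2,3}$) by direct computation, and combine these with Lemma \ref{art53} applied to the ``collapsed'' version of $G$ to obtain the strict inequality $PS(G)<PS(B_{3}'(1,1,0,n-4))$.
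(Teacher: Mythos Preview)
Your core approach coincides with the paper's: apply Lemma \ref{art21}(ii) to an edge incident with a degree-$3$ vertex of the theta, bound the unicyclic piece $G-e$ by $PS(D(4,n-4))$ via Lemma \ref{art51} (the surviving cycle has length $\geq 4$), bound the three forest pieces by $PS(P_{n-2})$ and $2\cdot PS(P_{n-4})$ via Lemma \ref{newwul2}, and compare with the analogous expansion of $PS(B_{3}'(1,1,0,n-4))$; the paper finds the difference equals $F(n-6)>0$ for $n\geq 7$, and disposes of $n=5,6$ by direct computation. The paper takes the edge on $P_{p}$ rather than on the $r$-path, but this is symmetric and yields identical bounds.

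Two points where you over-complicate. First, the preprocessing via Theorems \ref{art23} and \ref{art25} is unnecessary: Lemmas \ref{art51} and \ref{newwul2} bound $PS(G-e)$ and the forest pieces for \emph{arbitrary} $G$ in the class, so the paper never reduces to a canonical form. Second, the ``obstacle'' you anticipate is not one. Lemma \ref{art51}(i) together with (ii) already gives $PS(G-e)\leq PS(D(p+q+2,\,n-p-q-2))\leq PS(D(4,n-4))$ uniformly, irrespective of where pendant trees are attached, so no separate analysis of the $u_{r+1}$ configuration, no invocation of Lemma \ref{art53} on a ``collapsed'' graph, and no case split on $p+q+r+2=5$ are needed.
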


\begin{proof}
Let $u$ be  a vertex of degree 3 in induced subgraph  $B_{3}(p,q,r)$ of  $G$, and let
$u_{1}$ be a neighbor of $u$ in subgraph $P_{p}$ of $B_{3}(p,q,r)$.

Suppose that $n=5$. It is easy to know that $G\cong B_{3}(1,1,1)$. By Lemma \ref{art21}, we have $PS(B_{3}(1,1,1))=19$ and $PS(B_{3}^{'}(1,1,0,1))=20$. Thus, $PS(G)<PS(B_{3}^{'}(1,1,0,1))$.

Assume that $n=6$. This indicates that $G$ has three possible structures, i.e.,  $G\cong B_{3}(1,1,2)$,
 $G$ is isomorphic to $H$  obtained by joining a vertex of degree 2 in $B_{3}(1,1,1)$ to a single vertex, or $G$ is isomorphic to $M$ obtained by joining a vertex of degree 3 in $B_{3}(1,1,1)$ to a single vertex.
 By Lemma \ref{art21}, we obtain that $PS(B_{3}(1,1,2))=29$, $PS(H))=28$, $PS(M)=23$ and $PS(B_{3}^{'}(1,1,0,2))=34$.  These imply that
$PS(G)<PS(B_{3}^{'}(1,1,0,2))$.

Suppose that  $n\geq7$. By Lemma \ref{art21}, we get that $PS(G)=PS(G-uu_{1})
+PS(G
-\{u,u_{1}\})+2PS(G-C_{p+q+2})+2PS(G-C_{p+r+2})$.
By Lemma \ref{art51} and $p+r+2\geq4$, we have $PS(G-uu_{1})\leq  PS(D(4,n-4)$.
 Since $G
-\{u,u_{1}\}$, $G-C_{p+q+2}$ and $G-C_{p+r+2}$ are forests, by Lemma \ref{newwul2}, we have
 $PS(G
-\{u,u_{1}\})\leq PS(P_{n-2})$, $PS(G-C_{p+q+2})\leq  PS(P_{n-4})$ and $PS(G-C_{p+r+2})\leq PS(P_{n-4})$. So, $PS(B_{3}^{'}(1,1,0,n-4))-PS(G)
\geq[PS(D(4,n-4))+3PS(P_{n-3})+2PS(P_{n-4})]-[PS(D(4,n-4)+PS(P_{n-3})+5PS(P_{n-4})]
=2PS(P_{n-5})-PS(P_{n-4})=2F(n-4)-F(n-3)=F(n-6)>0$.
\end{proof}

Combining Lemmas \ref{art53} and \ref{art54}, the theorem \ref{art3} holds.

\noindent{\bf Data Availability}

No data were used to support this study.

\noindent{\bf Conflicts of Interest}

The authors declare that they have no conflicts of interest.

 \noindent{\bf Acknowledgements}

 This first author is supported by the National Natural Science Foundation of China (No. 12261071), the Natural Science  Foundation of Qinghai Province (2020-ZJ-920).

\end{document}